\numberwithin{equation}{section}
\newtheorem{Proposition}{Proposition}
\newtheorem{Corollary}{Corollary}
\newtheorem{Theorem}{Theorem}
\newtheorem{Lemma}{Lemma}
\theoremstyle{definition}
\newtheorem*{Definition}{Definition}
\newtheorem{Example}{Example}
\newtheorem*{Remark}{Remark}
\renewcommand{\phi}{\varphi}
\newcommand{\A}{\mathcal{A}}
\newcommand{\N}{\mathbb{N}}
\newcommand{\R}{\mathbb{R}}
\newcommand{\K}{\mathbf{K}}
\newcommand{\p}{\mathbf{P}}
\newcommand{\s}{\mathbf{S}}
\newcommand{\y}{\mathbf{y}}
\newcommand{\z}{\mathbf{z}}
\newcommand{\ba}{\mathbf{a}}
\newcommand{\x}{\mathbf{x}}
\newcommand{\w}{\mathbf{w}}
\newcommand{\pp}{\mathbf{p}}
\newcommand{\uu}{\mathbf{u}}
\newcommand{\vv}{\mathbf{v}}
\newcommand{\bsal}{b.s.a.l.}
\begin{document}

\title{Positivity and optimization for
semi-algebraic functions}

    \author{Jean B. Lasserre}
    \address{LAAS-CNRS  and Institute of Mathematics \\
     University of Toulouse, France}
    \email{lasserre@laas.fr}
    \urladdr{http://www.llas.fr/~lasserre}
    \thanks{}

    \author{Mihai Putinar}
    \address{   Department of Mathematics\\
            University of California at Santa Barbara\\
            Santa Barbara, California\\
            93106-3080}
    \email{mputinar@math.ucsb.edu}
    \urladdr{http://math.ucsb.edu/\textasciitilde mputinar}    
    \thanks{Work partially supported by a National Science Foundation Grant-USA. The authors are grateful to the American Institute of Mathematics in Palo Alto, CA, which has hosted a meeting where the present work was initiated}

    \keywords{}
    \subjclass[2000]{}

    \begin{abstract} We describe algebraic certificates of positivity for functions
    belonging to a finitely generated algebra of Borel measurable functions, with particular emphasis
    to algebras generated by semi-algebraic functions. In which case the standard global optimization
    problem with constraints given by elements of the same algebra is reduced via  a natural change of variables to the better understood case of polynomial optimization. A collection of simple examples and numerical experiments complement the theoretical parts of the article.
    	
    \end{abstract}

\maketitle

\section{Introduction}

 Key sums of squares representation results
from real algebraic geometry have been successfully applied during the last decade to
polynomial optimization. Indeed, for instance, by combining such
results with semidefinite programing (a powerful tool of convex optimization),
one may approximate as closely as desired 
(or sometimes compute exactly) the global minimum of a polynomial over a compact basic semi-algebraic set. Several other extensions have also been proposed for finding real zeros of polynomials equations, minimizing a rational function, and solving the {\em generalized problem of moments} with polynomial data. For more details on this so-called {\em moment-sos} approach the interested reader is referred to e.g. \cite{lasserrebook} and the many references therein.

Even though questions involving polynomial data represent a sufficiently large framework,
one may wonder whether this approach can be extended to a more general class of problems
that involve some larger algebra than the polynomial ring. Typical examples are algebras
generated by the polynomials and some elementary functions like the absolute value,
trigonometric polynomials, exponentials, splines or discontinuous step functions.

To mention only one classical source of such natural generalizations
we invoke the original works of Tchebysheff and Markov concerning the limiting values of integrals of 
specific measures
with a finite number of prescribed moments against non-polynomial functions.  A variety of remarkable results: first on the theoretical side \cite{Krein,KN}, then in statistics \cite{Isii1,Isii2,KS1,KS2},
numerical integration \cite{BP}
and not last in the theory of best approximation \cite{Z,KS2} have resulted from Tchebysheff-Markov
work. All these researches, plus quite recent
contingent works, such as wavelet decomposition of functions, involve non-polynomial moment problems. In this respect, our essay provides an algebraic analysis of positivity in the ``pre-dual'
of concrete distribution/measure spaces. The importance of non-polynomial solutions to classical
problems in the polynomial algebra, such as Hilbert's 17-th Problem, was early recognized, see for instance
\cite{Delzell1,Delzell2} and also \cite{BCR}.

{\bf Contribution.}
The purpose of this paper is to povide a first step in the understanding of Positivestellens\"atze
in algebras generated by non-polynomials functions, and to provide a robust relaxation method
for the numerical verification of the constrained positivity of such functions as well as for optimization purposes.

(a) First we consider the case of an algebra $\A$ of functions generated by a finite family
of functions that contains the polynomials. In this context we prove a Positivstellensatz
for a function positive on a compact set $\K\subset\R^n$ defined by finitely many inequality constraints involving functions of this algebra. When $\A$ is the polynomial ring the set
$\K$ is a basic semi-algebraic set and one retrieves a  
Positivstellensatz due the second author \cite{Put}.

(b) Further on we consider a slightly different framework. 
The algebra $\A$ now consists of functions generated by
basic monadic and dyadic relations on polynomials. The 
monadic operations are $\vert\cdot\vert$ and $(\cdot)^{1/p}$, $p=1,2,\ldots$,
while the
dyadic operations are $(+,\times,/,\wedge,\vee)$. Notice that this algebra contains 
highly nonlinear and nondifferentiable functions!
We then show that this algebra is a subclass
of semi-algebraic functions and every element of $\A$
has the very nice property that it has a {\it lifted basic semi-algebraic representation}.
As in (a) we provide an extension to this algebra of the
Positivstellensatz due the second author for polynomials \cite{Put}.

(c) Finally, in both cases (a) and (b) we also provide
a converging  hierarchy of semidefinite relaxations for optimization. 
Actually, the original problem in the algebra reduces to an equivalent {\it polynomial} optimization problem in a lifted space. The dimension of the lifting (i.e. the number of additional variables)
is directly related to the number of elementary operations
on polynomials needed to define the functions in the description of the original problem.
Moreover, the resulting polynomial optimization problem exhibits a natural sparsity pattern 
with respect to the lifted variables for which the important {\it running intersection property} holds true.
Therefore, if on the one hand one has to handle additional variables, on the other hand
this increase in dimension is partly compensated by this sparsity patterns that permits to
apply the sparse semidefinite relaxations defined in \cite{Waki} and whose convergence was proved in \cite{lasserresparse}.
To the best of our knowledge, the present article outlines a first systematic generalization of
 the moment-sos approach  initiated in \cite{Lasserre0,Lasserre1,Parrilo} for polynomial optimization
to algebras of non-polynomial functions and to (global) semi-algebraic optimization.\\

{\it Acknowledgement.} The authors thank Tim Netzer and Murray Marshall for
a careful reading of the article and for their constructive comments.\\

\section{Preliminaries and main result} Although a more general abstract measure space or probability space
framework is very natural for the main results of the present article, due to the optimization theory applications, we confine ourselves to
an algebra of Borel measurable functions defined on a set of the Euclidean space. 

We start by recalling a few standard definitions from real algebra, \cite{PD}. Specifically, let
$X \subset \R^d$ be a Borel measurable set and let $\A$ be a unital algebra of real valued Borel measurable functions defined on $X$. We denote by $\Sigma \A^2$ the convex cone of squares of elements of $\A$.
A {\it quadratic module} $Q \subset \A$ is a convex cone containing $1$, such that
$$ (\Sigma \A^2) \cdot Q \subset Q.$$ In practice we deal with {\it finitely generated quadratic modules},
of the form
$$ Q = \Sigma \A^2 + \Sigma \A^2 \cdot h_1 + ...+ \Sigma \A^2 \cdot h_m,$$
where $h_1,...,h_m \in \A$. The {\it positivity set} of a quadratic module $Q$ is by definition
$$ P(Q) = \{ x \in X\,:\: f(x) \geq 0, \ \forall f \in Q \}.$$

Similarly to complex algebraic geometry, the duality between a quadratic module and its positivity set
lies  at the heart of real algebraic geometry \cite{BCR, PD}. An useful tool for implementing this duality
is provided by the abstract moment problem on the algebra $\A$, from where we import a simple definition: we say that the quadratic module $Q \subset \A$ possesses the {\it moment property}
if every linear functional $L \in \A'$ which is non-negative on $Q$ is represented by a positive
Borel measure $\mu$ supported by $P(Q)$.

Also, for duality type arguments, we recall the following concept: an element $f$ of a convex cone
$C \subset \A$ lies in the {\it algebraic interior} of $C$, if for every $h \in \A$ there exists $\epsilon>0$
with the property $f + t h \in C$ for all $t, 0 \leq t <\epsilon$.

The starting point of all representation theorems below is the following simple but crucial observation.
Unless otherwise stated, all measurable functions and sets below are meant to be Borel mesurable.

\begin{Proposition} Let $\A$ be a unital algebra of measurable functions defined on the
measurable subset $X \subset \R^d$. Let $Q \subset \A$ be a quadratic module with an
algebraic interior point and possessing the moment property. 

If a function $f \in \A$ is positive on $P(Q)$, then $f \in Q$.
\end{Proposition}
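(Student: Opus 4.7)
My plan is a Hahn--Banach separation argument combined with the moment property. I would argue by contradiction: suppose $f \notin Q$. Since $Q$ is a convex subset of $\A$ possessing an algebraic interior point, and since the singleton $\{f\}$ is convex and disjoint from $Q$, Eidelheit's purely algebraic version of the Hahn--Banach separation theorem (which requires no topology on $\A$, only an algebraic interior point of one of the two sets) produces a nonzero linear functional $L \in \A'$ such that $L(f) \leq L(q)$ for every $q \in Q$.

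From this inequality I would extract the two statements $L \geq 0$ on $Q$ and $L(f) \leq 0$. Both follow from $Q$ being a convex cone containing $0$: for any $q \in Q$ and $t > 0$ the element $tq$ again lies in $Q$, so $t\,L(q) \geq L(f)$; letting $t \to +\infty$ forces $L(q) \geq 0$, while letting $t \to 0^+$ forces $L(f) \leq 0$.

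Now the moment property enters: applied to $L$, which by the previous step is non-negative on $Q$, it yields a positive Borel measure $\mu$ supported on $P(Q)$ with $L(g) = \int g\,d\mu$ for every $g \in \A$. Since $L$ is nonzero, $\mu$ cannot be the zero measure, and hence assigns positive mass to some portion of $P(Q)$. Because $f$ is strictly positive on $P(Q)$, this forces $L(f) = \int f\,d\mu > 0$, in contradiction with $L(f) \leq 0$. Therefore the assumption $f \notin Q$ was untenable, and $f \in Q$.

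The main obstacle, such as it is, lies in the separation step: one must be comfortable invoking Hahn--Banach in a purely algebraic setting, with no topological hypotheses on $\A$, and verify that the algebraic interior point of $Q$ supplied by the hypothesis is precisely the data required by Eidelheit's theorem to produce a \emph{nonzero} separating functional. Once that is in hand, the passage from the separating functional to a contradiction via the representing measure is essentially automatic, the only subtlety being the reminder that a nonzero positive measure integrates a strictly positive function to a strictly positive value.
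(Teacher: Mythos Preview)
Your proposal is correct and follows essentially the same route as the paper: a contradiction argument using Eidelheit--Hahn--Banach separation (the paper cites K\"othe and Kakutani for the same result), followed by the moment property to obtain a nonzero representing measure on $P(Q)$, which then integrates the strictly positive $f$ to a strictly positive value, contradicting $L(f)\le 0$. The only cosmetic difference is that the paper records the sharper conclusion $L(\xi)>0$ at the interior point $\xi$ to see that $\mu\neq 0$, whereas you argue directly from $L\neq 0$; both are valid.
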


\begin{proof} Assume by contradiction that $f \notin Q$, and let $\xi$ be an interior point of $Q$.
By the separation theorem for convex sets, see \cite{Ko}, or for a historical perspective
\cite{Kakutani, KR}, there exists a linear functional $L \in \A$ satisfying
$$ L(f) \leq 0 \leq L(h), \ \ h \in Q, \ \ L(\xi)>0.$$
By assumption there exists a positive Borel measure $\mu$, such that
$$ L(a) = \int_{P(Q)}  d\mu, \ \  a \in \A.$$
Since $L(\xi) >0$ we infer that the measure $\mu$ is non-zero. On the other hand,
$f|_{P(Q)} >0$ and $L(p) \leq 0$, a contradiction.
\end{proof}

The converse is also notable:

\begin{Corollary}  Assume $X$ compact and $\A \subset C(X)$ is a subalgebra of continuous functions which separates the points of $X$. Let $Q \subset \A$ be a quadratic module with the constant function $1$ in its
algebraic interior and
suppose that every $f \in \A$ which is positive on $P(Q)$ belongs to $Q$. Then $Q$ has the
moment property.
\end{Corollary}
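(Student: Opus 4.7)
The plan is to translate the hypothesis into positivity on $C(K)$, where $K := P(Q)$, and then invoke the Riesz representation theorem. Fix $L \in \A'$ with $L \geq 0$ on $Q$. Since $1 \in \Sigma\A^2 \subset Q$, one has $L(1) \geq 0$. In the degenerate case $L(1) = 0$, the algebraic-interior hypothesis on $1$ yields, for every $h \in \A$, positive scalars $c^{\pm}$ with $c^{\pm} \pm h \in Q$, whence $\pm L(h) \geq 0$; so $L \equiv 0$ and the zero measure represents it. Assume henceforth $L(1) > 0$.

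The crucial step is a closure argument that upgrades the given positivity on \emph{strictly positive} elements to one on elements merely non-negative on $K$. If $f \in \A$ satisfies $f|_K \geq 0$, then for every $\epsilon > 0$ the function $f + \epsilon$ is strictly positive on $K$, hence belongs to $Q$ by hypothesis, yielding $L(f) \geq -\epsilon L(1)$; letting $\epsilon \downarrow 0$ gives $L(f) \geq 0$. Applied to $\pm f$, this shows that $L$ vanishes on $\{f \in \A : f|_K \equiv 0\}$, so the formula $\tilde L(f|_K) := L(f)$ unambiguously defines a linear functional $\tilde L$ on $\A|_K \subset C(K)$. The same trick, applied to $\|g\|_{C(K)} \pm f$ with $f|_K = g$, gives the bound $|\tilde L(g)| \leq L(1)\,\|g\|_{C(K)}$.

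Since $\A$ is a unital subalgebra of $C(X)$ separating the points of the compact set $X$, its restriction $\A|_K$ is a unital point-separating subalgebra of $C(K)$, and the Stone--Weierstrass theorem renders it dense in $C(K)$. The bounded functional $\tilde L$ thus extends uniquely to a bounded linear functional on $C(K)$; positivity of the extension survives uniform limits by the same $\epsilon$-perturbation trick (if $g_n \to g \geq 0$ uniformly with $g_n \in \A|_K$, set $\epsilon_n := \|g - g_n\|_\infty$ and use $g_n + \epsilon_n \geq 0$ on $K$). The Riesz representation theorem then delivers a positive Borel measure $\mu$ supported on $P(Q)$ with $L(f) = \int_K f\,d\mu$ for every $f \in \A$. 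The principal obstacle is the perturbation step: the standing hypothesis concerns only strictly positive functions, so the $\epsilon$-approximation is essential to reach the non-strict inequalities required for both well-definedness of $\tilde L$ and positivity of its Stone--Weierstrass extension; once this is secured, the rest is routine.
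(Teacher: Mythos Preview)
Your proof is correct, and it takes a genuinely different route from the paper's. The paper works on the ambient space $X$: it invokes the M.~Riesz extension theorem to push $L$ from $\A$ to all of $C(X)$ (the majorizing condition being that every $F\in C(X)$ is bounded by a constant in $\A$), represents the extension by a signed measure $\mu$ on $X$, and then has two further tasks---showing $\mu\geq 0$ via Stone--Weierstrass density of $\A$ in $C(X)$ together with $L(a^2)\geq 0$, and localizing $\operatorname{supp}\mu\subset P(Q)$ by picking, for each $x\notin P(Q)$, an $h\in Q$ with $h(x)<0$ and arguing that $\int h\,a^2\,d\mu\geq 0$ for all $a\in\A$. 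By contrast, you pass immediately to $K=P(Q)$, factor $L$ through the restriction $\A|_K$, and extend by uniform continuity using Stone--Weierstrass on $C(K)$; positivity of the extension and the support condition then come for free. Your argument is more elementary (it avoids the Riesz extension theorem in favor of a straight density/continuity extension) and more economical; the paper's approach, on the other hand, keeps the analysis on the full space $X$ and makes the role of the generating inequalities of $Q$ in cutting down the support explicit. Note that the hypothesis ``$f>0$ on $P(Q)\Rightarrow f\in Q$'' is used by both arguments through the same $\epsilon$-perturbation---in your proof overtly, in the paper's proof implicitly, as it is needed to verify that $L$ is nonnegative on $\A\cap C(X)_+$ before the Riesz extension can be applied.
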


\begin{proof} Let $L \in \A'$ be a linear functional satisfying $L|_Q \geq 0, \ L(1) >0$. Due to the compactness assumption all continuous functions are comparable 
to the constant function $\xi =1$ in the following precise sense: for every $F \in C(X)$, there is a
positive constant $C$ with the property
$ -C \xi(\x) \leq F(\x) \leq  C \xi(\x)$. Then Marcel Riesz extension theorem \cite{MRiesz} provides a
linear functional $\Lambda \in C(X)'$ which extends $L.$ Hence $\Lambda$ is
represented by a signed measure $\mu$; since the algebra $\A$ is dense in $C(X)$ by
Stone-Weierstrass theorem, and $\Lambda(a^2) \geq 0$ for all $a\in \A$,
we infer that $\mu$ is a positive measure.

In addition,  we know by assumption that
for every point $x \notin P(Q)$ there exists
$h \in Q$ such that $h(x)<0$ and $h|_{P(Q)} \geq 0$. Hence
$$ \int_X h a^2 d\mu \geq 0, \ \ \ a \in A.$$
Again Stone -Weierstrass Theorem implies that $x \notin {\rm supp}(\mu)$, that is
the representing measure $\mu$ is
supported by the closed set $P(Q)$.
\end{proof}

Thus the main questions we are faced with at this first stage of inquiry are: under which conditions a quadratic module $Q$
has an interior point and/or possesses the moment property. While the first question has a simple
solution, having to do with the boundedness of the positivity set $P(Q)$, the second
one involves solving an abstract moment problem, it is more delicate, but on the other hand
has a long and glorious past, with a wealth of partial results and side remarks  \cite{KN,KS1,KS2}.

Towards finding solutions to the above two questions we consider only a finitely generated
algebra $ \A = \R [h_1,...,h_n]$, where $h=(h_1,...,h_n)$ is an n-tuple of measurable functions on the set
$X \subset \R^d$. Let $\y=(y_1,...,y_n)$ be variables, so that, by noetherianity
$$ \A \cong \R [\y]/I,$$
where $I$ is a finitely generated ideal of $\R[\y]$. 
The ideal $I$ describes all the algebraic relations between generators of the algebra.
This ideal is in addition {\it radical}, in the following
sense: if $p_1^2 +...+ p_k^2 \in I$, where $p_1,...,p_k \in \R[\y]$, then $p_1,...,p_k \in I$, \cite{BCR}.
Indeed, if $p_1(h)^2 +...+ p_k(h)^2 =0$ as functions defined on $X$, then $p_1 \circ h=0, ..., p_k \circ h =0$
in the algebra $\A$. Denote by "$\circ$" the usual composition of functions, i.e.,
with $g:\R^n\to\R$ and $f:\R\to\R$, 
$\x\mapsto (f\circ g)(\x):=f(g(\x))$.

\begin{Lemma} Assume that $ \A = \R [h_1,...,h_n]$ is a finitely generated algebra of measurable functions and let $Q \subset \A$ be a quadratic module. If $1-(h_1^2 +...+h_n^2) \in Q$, then
the constant function $1$ belongs to the algebraic interior of $Q$.
\end{Lemma}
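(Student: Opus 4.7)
My plan is to upgrade the hypothesis $1-\sum_i h_i^2\in Q$ to the full Archimedean property, that is, to show that for every $a\in\A$ there exists $N\in\N$ with $N\pm a\in Q$. Once this is in hand, the interior point claim is immediate: given $h\in\A$, choose $N$ with $N\pm h\in Q$; since $Q$ is a convex cone, $1\pm h/N=(N\pm h)/N\in Q$, and for any $t\in[0,1/N]$ the decomposition
\[
1+th=(1-tN)\cdot 1+tN\cdot(1+h/N)
\]
exhibits $1+th$ as a non-negative combination of elements of $Q$, so $\varepsilon:=1/N$ works.

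First I would record the easy observation that $1-h_i^2\in Q$ for each $i$, obtained by writing
\[
1-h_i^2=\Big(1-\sum_j h_j^2\Big)+\sum_{j\ne i}h_j^2,
\]
which is a sum of an element of $Q$ and elements of $\Sigma\A^2\subset Q$. I would then introduce the auxiliary set
\[
H:=\{\,a\in\A : N-a^2\in Q\text{ for some }N\geq 0\,\},
\]
and aim to prove $H$ is a subring of $\A$. Granting this, since $H\supset\R\cup\{h_1,\ldots,h_n\}$ we conclude $H=\R[h_1,\ldots,h_n]=\A$; and for any $a\in\A$ with $N-a^2\in Q$, the identities
\[
(N+1)\pm 2a=(N-a^2)+(1\mp a)^2
\]
show $(N+1)/2\pm a\in Q$, delivering the Archimedean property.

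The main obstacle is the closure of $H$ under multiplication, because $Q$ is stable only under multiplication by $\Sigma\A^2$ rather than by arbitrary elements of $Q$. The idea is to exploit the identity
\[
MN-(ab)^2=a^2(M-b^2)+M(N-a^2),
\]
in which the factor $a^2\in\Sigma\A^2$ guarantees $a^2(M-b^2)\in Q$ whenever $M-b^2\in Q$, so the right-hand side lies in $Q$. Closure under addition is easier and follows from
\[
2(M+N)-(a+b)^2=2(N-a^2)+2(M-b^2)+(a-b)^2,
\]
the last term again being a square in $\Sigma\A^2\subset Q$. With these two identities $H$ is confirmed to be a subring of $\A$, closing the chain $\text{hypothesis}\Rightarrow H=\A\Rightarrow Q\text{ Archimedean}\Rightarrow 1\in\mathrm{alg.int.}(Q)$.
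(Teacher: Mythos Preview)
Your argument is correct and is precisely the standard proof the paper defers to: the paper does not give its own argument but simply writes ``For a simple algebraic proof we refer to \cite{PD}'', and what you have written is exactly the Prestel--Delzell ring-of-bounded-elements argument (introduce $H=\{a:N-a^2\in Q\text{ for some }N\}$, show it is a subring containing the generators, and deduce Archimedeanity). One trivial slip: in the identity $(N+1)\pm 2a=(N-a^2)+(1\mp a)^2$ the signs should match on both sides, i.e.\ $(N+1)\pm 2a=(N-a^2)+(1\pm a)^2$; this does not affect the argument.
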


For a simple algebraic proof we refer to \cite{PD}. Already a first approximative solution to the moment problem
associated to $Q$ is available.

\begin{Proposition} Let $ \A = \R [h_1,...,h_n]$ be an algebra of measurable functions
defined on the set $X \subset \R^d$ and let $L \in \A'$ be a linear functional which is non-negative on the
quadratic module $\Sigma \A^2 + (1-(h_1^2 +...+h_n^2) )\Sigma \A^2$. Then there exists a positive measure $\nu$, supported by the unit ball in $\R^n$, with the property
$$ L(p(h_1,...,h_n)) = \int_{\| \y \| \leq 1} p(\y) d\nu(\y), \ \  p \in \R[\y].$$
\end{Proposition}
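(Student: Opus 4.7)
The plan is to pull the functional $L$ back along the canonical surjection
$\pi:\R[\y]\twoheadrightarrow\A$, $\pi(p)=p(h_1,\ldots,h_n)$, and then invoke Putinar's classical moment theorem for the unit ball in $\R^n$, which is the polynomial prototype of what we want to prove.

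Concretely, I would define
$$
\tilde L:\R[\y]\longrightarrow\R,\qquad \tilde L(p):=L\bigl(p(h_1,\ldots,h_n)\bigr),
$$
which is well defined and linear. The first step is to verify that $\tilde L$ is non-negative on the polynomial quadratic module
$$
Q_0:=\Sigma\R[\y]^2+(1-\|\y\|^2)\,\Sigma\R[\y]^2\subset\R[\y].
$$
This is immediate from the hypothesis on $L$: if $\sigma_0,\sigma_1\in\Sigma\R[\y]^2$, then $\pi(\sigma_0)\in\Sigma\A^2$ and $\pi(\sigma_1)\in\Sigma\A^2$, so
$$
\pi\bigl(\sigma_0+(1-\|\y\|^2)\sigma_1\bigr)=\pi(\sigma_0)+\bigl(1-\textstyle\sum_i h_i^2\bigr)\pi(\sigma_1)\in\Sigma\A^2+(1-\textstyle\sum_i h_i^2)\Sigma\A^2,
$$
and by assumption $L$ is non-negative on the right-hand side.

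The second step is to apply Putinar's Positivstellensatz (in its moment-theoretic form) to $\tilde L$ on $\R[\y]$. The quadratic module $Q_0$ is manifestly Archimedean, since $1-\|\y\|^2\in Q_0$ directly bounds the generators. Consequently, there is a positive Borel measure $\nu$ supported on the closed unit ball $\{\y\in\R^n:\|\y\|\le 1\}$ representing $\tilde L$:
$$
\tilde L(p)=\int_{\|\y\|\le 1}p(\y)\,d\nu(\y),\qquad p\in\R[\y].
$$

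Unwinding the definition of $\tilde L$ gives the claimed identity. The only subtlety is the first step — verifying that the pullback of the quadratic module in $\A$ contains the polynomial quadratic module $Q_0$ — but this is essentially just functoriality of squares under the algebra map $\pi$, so nothing delicate occurs; the real content is imported from Putinar's theorem applied in the polynomial ring.
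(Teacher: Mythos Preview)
Your proof is correct. You reduce to the polynomial ring by pulling back along $\pi$ and then invoke Putinar's moment theorem for the archimedean module $Q_0=\Sigma\R[\y]^2+(1-\|\y\|^2)\Sigma\R[\y]^2$; nothing is missing.

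The paper takes a different route: it gives a direct, self-contained argument via the GNS construction. One forms the inner product $\langle p,q\rangle=L(p(h)q(h))$ on $\R[\y]$, quotients by the null ideal, completes to a Hilbert space $H$, and observes that the multiplication operators $M_i$ are commuting self-adjoint contractions because the hypothesis gives $\langle M_i p,M_i p\rangle=L(h_i^2 p(h)^2)\le L(p(h)^2)$. The spectral theorem for the commuting tuple $(M_1,\ldots,M_n)$ then produces the measure $\nu$, and the contraction bound $\sum_i M_i^\ast M_i\le I$ forces $\operatorname{supp}\nu\subset\{\|\y\|\le 1\}$. Your approach is shorter but imports Putinar's theorem as a black box; the paper's approach avoids that dependency entirely and is essentially a direct operator-theoretic proof of the special case you are citing. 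Since Putinar's Positivstellensatz is itself one of the central results being generalized in this article, the authors presumably preferred not to invoke it circularly at this preliminary stage.
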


\begin{proof}   The familiar Gelfand-Naimark-Segal construction can be invoked at this moment.
In short, define an inner product on the polynomial algebra $\R[\y]$ by
$$ \langle p, q \rangle = L( p(h) q(h)), \ \ p,q \in \R[y].$$ Denote by $J$ the set of null-vectors
$ p \in J$ if and only if $\langle p, p \rangle = 0.$ It is an ideal of $\R[\y]$  by Cauchy-Schwarz
inequality. The quotient space $\R[\y]/J$ is endowed then with a non-degenerate
inner product structure. Let $H$ be its Hilbert space completion. The multiplication 
operators with the variables $M_i p (\y) = y_i p(\y)$ are self-adjoint and bounded, due to the
positivity assumption imposed on $L$:
$$ \langle M_i  p , M_i p \rangle = L( h_i^2 p(h)^2) \leq L(p(h)^2) = \langle p, p \rangle.$$
In addition, the operators $M_i$ mutually commute on the Hilbert space $H$. Thus the spectral theorem
gives a positive Borel measure $\nu$, with the property
$$ L(p(h)) = \langle p, 1 \rangle = \langle p(M_1,...,M_n)1, 1 \rangle = \int p d\nu, \ \ p \in \R[\y].$$
In addition, the support of the measure $\nu$ is contained in the unit ball, as a consequence of the
spherical contraction assumption $M_1^\ast M_1 +...+M_n^\ast M_n \leq I.$
\end{proof}

Thus, a quadratic module $Q \subset \A = \R [h_1,...,h_n]$ with $(1-(h_1^2 +...+h_n^2) )\in Q$
satisfies the moment property if the measure $\nu$ appearing in the above proposition
is the push-forward via the map $h : X \longrightarrow \R^d$ of a positive measure $\mu$ supported by $P(Q)$, that is:
\[\int_{\| \y \| \leq 1} p(\y) d\nu(\y) =\,\int_{P(Q)} p(h(\x)) d\mu(\x), \ \  p \in \R[\y]\,.\]
A series of sufficient conditions assuring $\nu = h_\ast \mu$ in the above sense are discussed next.

Again for $\A = \R [h_1,...,h_n]$ consider a finitely generated quadratic module
$Q = \Sigma \A^2 + g_0(h) \Sigma \A^2+ g_1(h) \Sigma \A^2+...+ g_m(h) \Sigma \A^2$,
where $g_0,...,g_m \in \R[\y]$ and $g_0(\y) = 1-y_1^2-...-y_n^2.$ Let also the radical ideal
$I = (p_1(\y),...,p_k(\y))$. Then
$$\tilde{Q} = \Sigma \R[\y]^2 + g_0(\y) \Sigma \R[\y]^2  + ...+ g_m(\y) \Sigma \R[\y]^2 
\pm p_1  \Sigma \R[\y]^2 \pm ...\pm p_k  \Sigma \R[\y]^2 $$
is a quadratic module in the polynomial algebra $\R[\y]$. Its positivity set
$P(\tilde{Q})$ is contained in the closed unit ball intersection with the zero set $V(I)$
of the ideal $I$. Note that we have a natural pull-back (substitution) map
$$ h^\ast : \R[\y] \longrightarrow \A, \ \ h^\ast (p) = p\circ h, \ \ p \in \R[\y],$$
and $h^\ast(\tilde{Q}) = Q$. At the level of positivity sets we obtain
$$ h (P(Q)) \subset P(\tilde{Q}),$$ but the inclusion might be strict. Indeed, we derive the following
direct observation.

\begin{Lemma} With the above notation, assume that $f = p\circ h \in Q$, where $p \in \R[\y]$.
Then $p$ is non-negative on $P(\tilde{Q})$.
\end{Lemma}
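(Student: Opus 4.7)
My plan is to lift the relation $f = p\circ h$ from $\A$ to the polynomial ring $\R[\y]$ via the substitution map $h^\ast \colon q \mapsto q\circ h$, and then read non-negativity on $P(\tilde{Q})$ off directly from the defining generators of $\tilde{Q}$.

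I would first unpack the hypothesis $f \in Q$. Every element of $\A = \R[h_1,\ldots,h_n]$ is of the form $q(h)$ for some $q\in \R[\y]$, so every $\sigma \in \Sigma\A^2$ equals $s(h)$ for some $s\in \Sigma\R[\y]^2$. Collecting terms in a representation of $f$ coming from the defining sum $\Sigma\A^2 + \sum_{i=0}^m g_i(h)\Sigma\A^2$, I obtain polynomials $s_{-1}, s_0, \ldots, s_m \in \Sigma\R[\y]^2$ such that
\[ f = F(h), \qquad F(\y) \,:=\, s_{-1}(\y) + \sum_{i=0}^m g_i(\y)\, s_i(\y) \in \R[\y]. \]
By inspection $F$ already belongs to $\tilde{Q}$, and in particular $F(\y) \geq 0$ for every $\y \in P(\tilde{Q})$, since each $g_i$ is non-negative there and each $s_i$ is a sum of squares.

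Next I would invoke the identification $\A \cong \R[\y]/I$, which says that $\ker h^\ast = I = (p_1,\ldots,p_k)$. From $p\circ h = f = F\circ h$ I conclude $p - F \in I$, so $p = F + \sum_{j=1}^k r_j p_j$ for suitable $r_j \in \R[\y]$. Any $\y \in P(\tilde{Q})$ lies in the zero set $V(I)$, so each $p_j(\y)$ vanishes, and therefore $p(\y) = F(\y) \geq 0$, as required.

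The argument is essentially bookkeeping, with no serious obstacle; the one step deserving care is the passage from $\sigma \in \Sigma\A^2$ to its polynomial preimage $s \in \Sigma\R[\y]^2$, which is legitimate precisely because $\A$ is finitely generated by $h_1,\ldots,h_n$. I note in passing that one can in fact strengthen the conclusion to $p \in \tilde{Q}$ by writing $4 r_j = (r_j+1)^2 - (r_j-1)^2$ and thus absorbing each $r_j p_j$ into the $\pm p_j \Sigma\R[\y]^2$ generators of $\tilde{Q}$, but this stronger statement is not needed for the present lemma.
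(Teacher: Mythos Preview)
Your proof is correct and follows essentially the same route as the paper: lift the sum-of-squares decomposition of $f$ from $\A$ to $\R[\y]$, observe that the difference $p - F$ lies in the ideal $I = \ker h^\ast$, and evaluate on $P(\tilde{Q}) \subset V(I)$. Your write-up is in fact a bit more explicit than the paper's about why elements of $\Sigma\A^2$ lift to $\Sigma\R[\y]^2$, and your closing remark that one can even obtain $p \in \tilde{Q}$ via $4r_j = (r_j+1)^2 - (r_j-1)^2$ is a nice addendum not made in the paper.
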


\begin{proof} The assumption $f \in Q$ amounts to the algebraic identity
$$ f\circ h = \sigma\circ h + (g_0\circ h)\,( \sigma_0\circ h)  + \cdots+ (g_m\circ h )\,(\sigma_m\circ h),$$
where $\sigma, \sigma_0,...,\sigma_m \in \Sigma \R[\y]^2.$ In its turn, this identity becomes
$$ f(\y) =  \sigma(\y) + g_0(\y) \sigma_0(\y)  + ...+ g_m(\y) \sigma_m(\y) + r(\y),$$
where $r \in I$. By evaluating at a point $\ba \in P(\tilde{Q}) \subset V(I)$ we infer
$$ f(\ba) =  \sigma(\ba) + g_0(\ba) \sigma_0(\ba)  + ...+ g_m(\ba) \sigma_m(\ba) \geq 0.$$

\end{proof}

We will see in the next section simple examples showing that the positivity of
a function $f$ on $P(Q)$ does not guarantee in general a decomposition of the form
$f \in Q$. The gap between the two statements is explained in the following general result.

\begin{Theorem} 
\label{thmain}
Let $\A = \R [h_1,...,h_n]$ be a finitely generated algebra of measurable functions
defined on a measurable set $X \subset \R^d$ and let $Q$ be a quadratic module
with the function $1$ in its algebraic interior. Assume that $f \in \A$ is positive
on $P(Q)$. Then $f \in \hat{Q}$, where $\hat{Q}$ is the pullback by the map
$h : X \longrightarrow \R^d$ of a quadratic module which serves as
a positivity certificate for the set $h(P(Q))$.
\end{Theorem}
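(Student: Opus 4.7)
The plan is to translate the positivity of $f$ on $P(Q)$ into a classical polynomial positivity statement on the image $h(P(Q)) \subset \R^n$, invoke a Putinar-style Positivstellensatz in $\R[\y]$, and pull the resulting sum-of-squares decomposition back through the substitution map $h^\ast : \R[\y] \to \A$. Using the presentation $\A \cong \R[\y]/I$ from the preliminaries, I would choose a representative $p \in \R[\y]$ with $f = p \circ h$ and fix polynomial generators $g_0, \dots, g_m \in \R[\y]$ of $Q$ (in the sense that $Q$ is generated by $g_0 \circ h, \dots, g_m \circ h$). The hypothesis $f > 0$ on $P(Q)$ then reads $p(\y) > 0$ for all $\y \in h(P(Q))$.

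Next I would extract boundedness of $h(P(Q))$ from the algebraic interior hypothesis. Since $1$ lies in the algebraic interior of $Q$, applied to $\pm h_i$ there exist $\epsilon_i > 0$ with $1 \pm t h_i \in Q$ for $0 \leq t < \epsilon_i$; evaluating on $P(Q)$ gives a uniform bound $|h_i| \leq 1/\epsilon_i$. Hence $h(P(Q))$ is contained in a Euclidean ball of radius $R$ in $\R^n$, and its closure $K := \overline{h(P(Q))}$ is a compact subset of $V(I) \cap \{\|\y\| \leq R\}$.

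I would then assemble an archimedean polynomial quadratic module $\tilde Q \subset \R[\y]$ containing the $g_j$, the generators of $I$ with both signs (forcing the variety constraint), the ball witness $R^2 - \|\y\|^2$ (enforcing archimedeanity in the sense of Lemma~1), and, if necessary, further polynomial inequalities chosen so that $P(\tilde Q)$ sits inside the open locus $\{p > 0\}$ while still containing $h(P(Q))$. On such a module Putinar's theorem \cite{Put} yields a certificate $p \in \tilde Q$; applying $h^\ast$ term-by-term sends squares to squares in $\A$ and annihilates the ideal, producing $f = p \circ h \in h^\ast(\tilde Q) =: \hat Q$, a quadratic module in $\A$ whose generators are the composites of the generators of $\tilde Q$ with $h$. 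The main obstacle, and the reason $\hat Q$ cannot be canonical, is precisely the choice of the extra polynomial inequalities: a priori $p$ is known to be strictly positive only on $h(P(Q))$, which need not itself be basic semi-algebraic, so one must tailor $\tilde Q$ to squeeze its positivity set between $h(P(Q))$ and the strict positivity locus of $p$ before Putinar can be invoked. This gap between $h(P(Q))$ and its polynomial envelope is exactly the phenomenon the simple examples in the next section are designed to exhibit.
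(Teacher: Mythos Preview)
Your proposal is correct and matches the paper's lift-and-pullback strategy. Note that the paper gives no separate proof of Theorem~\ref{thmain}: it is presented as a framework statement, and the remark immediately following it observes that one may simply take $\hat Q$ to be the pullback of the full cone $\{p\in\R[\y]: p|_{h(X)\cap P(\tilde Q)}\geq 0\}$, which renders $f\in\hat Q$ essentially a tautology once one checks $h(P(Q))=h(X)\cap P(\tilde Q)$. Your more constructive route through Putinar's theorem is precisely the content of Corollary~\ref{coro1}, and you have correctly isolated the obstruction---squeezing a basic closed semi-algebraic set between $h(P(Q))$ and $\{p>0\}$ generally requires countably many polynomial inequalities describing $h(X)\cap P(\tilde Q)$ (whence the closedness hypothesis on $h(X)$ in Corollary~\ref{coro1}), of which only finitely many are needed for any fixed $f$ by a compactness argument.
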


Of course the choice of $\hat{Q}$ is not unique, and it may not be a finitely
generated quadratic module. For instance, as a first rough approximation,
one can choose
$$ Q' = \{ p\circ h; \ p|_{h(X) \cap P(\tilde{Q})} \geq 0 \},$$
where $\tilde{Q}$ is the pull-back of $Q$ via the projection map
$\pi : \R[\y] \longrightarrow \A.$

\begin{Corollary} 
\label{coro1}Assume, in the conditions of Theorem \ref{thmain}, that the set $h(X)$
is closed in $\R[y]$. Choose an at most countable set of polynomials
$v_i \in \R[\y], \ i \in J, \ | J | \leq \infty,$ with the property
$$ \{ \y \in \R^n\,: \ v_i(\y) \geq 0, \ i \in J \} = h(X) \cap P(\tilde{Q}).$$
Then every element $f\circ h \in \A$ which is positive on the set $P(Q)$
can be written as
$$ f\circ h = \sigma \circ h + (M- h_1^2-...-h_n^2)\,(\sigma_0\circ h)  + 
\sum_{i \in J_0} (v_i\circ h) (\sigma_i\circ h),$$
where $M>0$, the subset $J_0 \subset J$ is finite and $\sigma, \sigma_0, \sigma_i \in \Sigma \R[\y]^2,
\ i \in J_0.$
\end{Corollary}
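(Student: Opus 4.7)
The plan is to reduce the problem to Putinar's classical Positivstellensatz applied at the polynomial level in $\R^n$, on a compact basic semi-algebraic set cut out by finitely many of the $v_i$ together with a ball constraint, and then pull the resulting decomposition back along $h$.

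First I would identify the positivity set in $\R^n$. The text already notes $h(P(Q)) \subset P(\tilde{Q})$; the reverse inclusion $h(X) \cap P(\tilde{Q}) \subset h(P(Q))$ follows because if $h(\x) \in P(\tilde{Q})$ then every generator $g_j$ of $\tilde{Q}$ satisfies $g_j(h(\x)) \geq 0$, so $\x \in P(Q)$. Consequently $h(P(Q)) = h(X) \cap P(\tilde{Q}) = \{\y \in \R^n : v_i(\y) \geq 0,\ i \in J\} =: K$. Because the generator $g_0(\y) = 1 - \|\y\|^2$ of $\tilde{Q}$ forces $P(\tilde{Q})$ into the closed unit ball, $K$ is closed and bounded, hence compact; and the polynomial $f$ is strictly positive on $K$ by hypothesis.

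The key step is a finite-intersection reduction. Fix any $M \geq 1$ and, for each finite $J_0 \subset J$, form the compact set
\[ C_{J_0} := \{\y \in \R^n : v_i(\y) \geq 0\ (i \in J_0),\ \|\y\|^2 \leq M\} \cap \{f \leq 0\}. \]
This family is a filter base, since the intersection of finitely many $C_{J_1}, \ldots, C_{J_r}$ equals $C_{J_1 \cup \cdots \cup J_r}$, and its total intersection is $K \cap \{f \leq 0\} = \emptyset$. The finite intersection property for compacts therefore delivers a single finite $J_0$ with $C_{J_0} = \emptyset$, i.e.\ $f > 0$ on the compact set $K_{J_0, M} := \{\y : v_i(\y) \geq 0\ (i \in J_0),\ \|\y\|^2 \leq M\}$. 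This is the principal obstacle: it is the only step that requires genuine topological input beyond the algebraic framework already in place, and it is essential for passing from the potentially infinite description of $K$ to a finitely generated Archimedean quadratic module.

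Finally, the quadratic module
\[ \tilde{Q}_0 := \Sigma \R[\y]^2 + (M - \|\y\|^2)\,\Sigma \R[\y]^2 + \sum_{i \in J_0} v_i\,\Sigma \R[\y]^2 \]
is Archimedean (it contains $M - \|\y\|^2$) with positivity set exactly $K_{J_0, M}$. Putinar's Positivstellensatz applied to $f$ on $K_{J_0, M}$ therefore produces a representation
\[ f(\y) = \sigma(\y) + (M - \|\y\|^2)\sigma_0(\y) + \sum_{i \in J_0} v_i(\y)\sigma_i(\y), \]
with $\sigma,\sigma_0,\sigma_i \in \Sigma \R[\y]^2$. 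Composing both sides with $h$, and using $\|h(\x)\|^2 = h_1(\x)^2 + \cdots + h_n(\x)^2$, yields the identity stated in the Corollary.
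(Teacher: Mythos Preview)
Your proof is correct and is precisely the argument the paper has in mind; the paper states the corollary without an explicit proof, treating it as an immediate specialization of Theorem~\ref{thmain} and Putinar's Positivstellensatz, and your write-up supplies exactly the missing details (the equality $h(P(Q))=h(X)\cap P(\tilde Q)$, the compactness reduction to a finite $J_0$, and the pull-back along $h$). One small remark: the closedness hypothesis on $h(X)$ is what makes a description $\{v_i\ge 0\}=h(X)\cap P(\tilde Q)$ possible in the first place, rather than something you need again inside the argument, since compactness of $K$ already follows from $K=\{v_i\ge 0\}\subset P(\tilde Q)$.
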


The remaining of this section
is concerned with particular choices of the
"saturation" process $Q \mapsto \hat{Q}$ and the resulting effective sums of squares representations.

\subsection{Examples in algebras of semi-algebraic functions}
Let $X=\R^n$ and let $\A = \R[\x,h_1(\x), ..., h_t(\x)]$ for some functions $h_i:\R^n\to\R$,
$i=1,\ldots,t$. Consider the set $\K\subset\R^n$ defined by:
\begin{equation}
\label{setk}
\K\,:=\,\{\x\in\R^n\::\: g_j(\x,h_1(\x),\ldots,h_t(\x))\geq 0,\:j=1,\ldots,m\},
\end{equation}
and suppose that $\K$ is compact and 
\[\x\mapsto M-\Vert \x\Vert^2-\sum_{i=1}^t \vert h_i(\x)\vert^2\,>0\,\quad\mbox{on }\K.\]
\begin{Example}
\label{ex1}
Here $h_i(\x)=\vert x_i\vert$, $i=1,\ldots,n$ (so let
$\vert\x\vert=(\vert x_i\vert)$.  In this case 
$\y=(y_1,\ldots,y_{2n})$ and one has the algebraic relation
$y^2_{n+i}=y^2_i$, $i=1,\ldots,n$, between the generators, hence $\A \cong \R[\y]/I$, with 
$I:=\langle y^2_{n+1}-y^2_1,\ldots,y^2_{2n}-y^2_{n}\rangle$. 
Let's choose the quadratic module
with the constant $1$ in its algebraic interior:
\begin{eqnarray*}
Q &=& \Sigma \A^2 
+ (M-(\sum_{i=1}^nx_i^2 +|x_i|^2))\Sigma \A^2  +\sum_{j=1}^m\Sigma \A^2 g_j(\x,\vert\x\vert)\\
&=&  \Sigma \A^2 + (M-2\Vert\x\Vert^2) \Sigma \A^2+\sum_{j=1}^m\Sigma \A^2 g_j(\x,\vert\x\vert)
\end{eqnarray*}
The lifted quadratic module is
$$ \tilde{Q} = \Sigma \R[\y]^2 + (M-\Vert\y\Vert^2) \Sigma \R[\y]^2+
\sum_{j=1}^m\Sigma \R[\y]2 g_j(\y),$$
whose positivity support set is the set
$$\{\y\in\R^{2n}\::\:M-\Vert\y\Vert^2\geq 0;\quad g_j(\y)\geq0,\:j=1\ldots,m\}.$$
On the other hand, $ P(Q)$ lives in $\K\subset\R^n$.

For instance if $\K=[-1,1]^n$ and $t=n$ with $g_j(\x)=1-x_j^2$, $j=1,\ldots,t$,
a function such as $|x_1|^3 + 1/2$ belongs to the algebra $\A$, is positive on $P(Q)$, but it cannot be
written as 
$$ |x_1|^3 + 1/2 = \sigma (\x,|\x|) + (2n-2\Vert\x\Vert^2)\sigma_0(\x,|\x|)
+\sum_{j=1}^n\sigma_j(\x,\vert\x\vert)g_j(\x),$$
with $\sigma, \sigma_j\in \Sigma \R[\y]^2$. Indeed, such a representation would lift to
$$ y_{n+1}^3 + 1/2 = \sigma (\y) + (2n-\Vert \y\Vert^2)\sigma_0(\y)+
\sum_{j=1}^n\sigma_j(\y)g_j(\y),$$
and this is obviously impossible by choosing for example $y_i=0$ for all
$i\neq n+1$ and $y_{n+1}=-1$.

In order to obtain the correct representation we invoke the main result of the previous section,
and first consider the image set
$$ h(X)\,=\,\{ (\x,|\x|); \ \x \in \R^n \} \subset \R^{2n}.$$
This set can be exactly described by the inequalities
$$ y_{n+i}^2 - y_i^2 \geq 0, \ y_{n+i}^2 - y_i^2 \geq 0, \ y_{n+i} \geq 0;\quad i=1,\ldots,n.$$ 
Therefore, 
\begin{eqnarray*}
h(X)\cap P(\tilde{Q})\,=\,\{\y\in\R^{2n}&:&2n-\Vert \y\Vert^2\geq 0;\:g_j(\x)\geq0,\:j=1,\ldots,t;\\
&&y_{n+i}^2-y_i^2=0,\:y_{n+i}\geq0,\:i=1,\ldots, n\},\end{eqnarray*}
and by Corollary \ref{coro1}, the quadratic module in $\R[\y]$
generated by the polynomials that describe the above semi-algebraic set
$h(X)\cap P(\tilde{Q})$, provide the desired certificate
of positivity. And so we have:

\begin{Proposition} Let $\K$ be as in (\ref{setk}).
An element $f$ of the algebra $\R[\x,|\x|]$ which is positive on 
$\K$ can be written as
\begin{eqnarray*}
 \x\mapsto f(\x,\vert \x\vert) &=&\sigma(\x,|\x|) + (n-\Vert\x\Vert^2)\sigma_0(\x,|\x|) \\
&&+ \sum_{i=1}^n\vert x_i\vert \, \sigma_i(\x,|\x|)
+\sum_{j=1}^tg_j(\x,\vert\x\vert) \, \psi_j(\x,|\x|)\end{eqnarray*}
where $\sigma, \sigma_i, \psi_j \in \Sigma \R[\y]^2$.
\end{Proposition}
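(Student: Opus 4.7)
The plan is to specialize Theorem \ref{thmain} and Corollary \ref{coro1} to the concrete generator map $h : \R^n \to \R^{2n}$, $\x \mapsto (\x,|\x|)$, producing an explicit polynomial description of $h(\R^n)\cap P(\tilde{Q})$ and then pulling the resulting identity back to $\A$.

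First I would check the hypotheses of Theorem \ref{thmain}. Since $Q$ contains the function $M-2\|\x\|^2 = M - \sum_{i=1}^{n} x_i^2 - \sum_{i=1}^{n} |x_i|^2$, which is of the form $M - \sum_{i=1}^{2n} h_i^2$ for the $2n$ generators $h_i = x_i$, $h_{n+i} = |x_i|$, Lemma 1 (after an innocuous rescaling) places the constant function $1$ in the algebraic interior of $Q$; furthermore $P(Q)=\K$ follows directly from the definitions together with the strict positivity assumption on $M - 2\|\x\|^2$. Next, the image $h(\R^n)$ is closed in $\R^{2n}$ and coincides exactly with
\[
\{\y \in \R^{2n} : y_{n+i}^2 - y_i^2 = 0,\ y_{n+i} \geq 0,\ i=1,\ldots,n\},
\]
so that
\[
h(\R^n) \cap P(\tilde{Q}) = \{\y : M-\|\y\|^2 \geq 0,\ g_j(\y) \geq 0,\ y_{n+i}^2 - y_i^2 = 0,\ y_{n+i} \geq 0\}.
\]
This is a finite basic semi-algebraic description, with each equality $y_{n+i}^2 - y_i^2 = 0$ encoded as two $\pm$ inequalities in the countable list $\{v_i\}$ supplied to Corollary \ref{coro1}.

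Corollary \ref{coro1} then produces a representation
\[
f\circ h = \sigma\circ h + (M-2\|\x\|^2)(\sigma_0\circ h) + \sum_{i=1}^n |x_i|(\sigma_i\circ h) + \sum_{j=1}^{m} g_j(\x,|\x|)(\psi_j\circ h) + R,
\]
where the remainder $R$ collects the contributions of the equality generators $\pm(y_{n+i}^2 - y_i^2)$. Under the substitution $y_i\mapsto x_i$, $y_{n+i}\mapsto |x_i|$, each such generator vanishes identically in $\A$, since the defining ideal of $\A$ is precisely $I = \langle y_{n+i}^2 - y_i^2 \rangle$; hence $R \equiv 0$ as a function on $\R^n$. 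Absorbing the scalar factor relating $M-2\|\x\|^2$ to $n-\|\x\|^2$ into $\sigma_0$ (this is the normalization $M=2n$ implicit in the statement, valid whenever $\K$ lies inside the ball of radius $\sqrt{n}$, as in Example \ref{ex1}) yields the claimed identity.

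The main obstacle I anticipate is making sure nothing is lost in translating the polynomial-level representation of Corollary \ref{coro1} back to an $\A$-level identity: specifically, one must verify that the SOS coefficients $\sigma,\sigma_0,\sigma_i,\psi_j \in \Sigma \R[\y]^2$ remain SOS after composition with $h$, which is immediate from $h^\ast(\Sigma \R[\y]^2) \subset \Sigma \A^2$, and that the two-sided equality generators really do drop out under $h^\ast$. Both are essentially formal, so the argument reduces to direct substitution once the geometric description of $h(\R^n)\cap P(\tilde{Q})$ is in place.
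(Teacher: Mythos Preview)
Your proposal is correct and follows essentially the same route as the paper: the proposition is stated inside Example~\ref{ex1} as a direct consequence of Corollary~\ref{coro1}, once the image $h(\R^n)$ is described by the equations $y_{n+i}^2-y_i^2=0$ and inequalities $y_{n+i}\geq 0$, and then the equality generators vanish under pullback since they lie in the ideal $I$. Your added remarks about the algebraic-interior hypothesis, the survival of SOS under $h^\ast$, and the normalization $M=2n$ are exactly the details the paper leaves implicit.
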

\end{Example}

\begin{Example}
\label{ex2}
Given  two polynomials $p,q\in\R[\x]$,
let $\A:=\R[\x,p(\x)\vee q(\x)]$ where $a\vee b:=\max[a,b]$, and
let $\K$ be as in (\ref{setk}) with $t=1$ and $h_1=p(\x)\vee q(\x)$.
Recall that $2 (a\vee b)=\vert a-b\vert+(a+b)$, and so 
one may consider the new algebra $\A'=\R[\x,\vert p(\x)-q(\x)\vert]$ since a polynomial
in the variables $\x$ and $p(\x)\vee q(\x)]$ is a particular polynomial in $\x$ and $\vert p(\x)-q(\x)\vert$. In this case $\y=(y_1,\ldots,y_{n+1})$ and one has the algebraic dependency
\[y_{n+1}^2\,=\,(p(\x)-q(\x)) ^2\]
and the additional constraint $y_{n+1}\geq0$.

\begin{Proposition} Let $\K$ be as in (\ref{setk}) and let
$f$ be an element 
of the algebra $\mathcal{A}:=\R[\x,p(\x)\vee q(\x)]]$ which is positive on $\K$.
Equivalently
\[\x \mapsto f(\x,p(\x)\vee q(\x)])\,=\,\tilde{f}(\x,\vert p(\x)-q(\x)\vert),\]
for some $\tilde{f}\in\mathcal{A}':=\R[\x,\vert p(\x)-q(\x)\vert]$, positive
on $\K$. Then $\tilde{f}$ can be written
\begin{eqnarray*}
\tilde{f}(\x,\vert p(\x)-q(\x)\vert)&=&\sigma(\x,\vert p(\x)-q(\x)\vert)\\
&&+(M-\Vert \x\Vert^2-\vert p(\x)-q(\x)\vert^2)\sigma_0(\x,\vert p(\x)-q(\x)\vert)\\
&& + \vert p(\x)-q(\x)\vert \sigma_1(\x,\vert p(\x)-q(\x)\vert)\\
&& + \sum_{j=1}^m\psi_j(\x,\vert p(\x)-q(\x)\vert)g_j(\x,\vert p(\x)-q(\x)\vert)
\end{eqnarray*}
where $\sigma, \sigma_i, \psi_j \in \Sigma \R[y_1,\ldots,y_{n+1}]^2$.
\end{Proposition}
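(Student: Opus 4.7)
The plan is to follow the template of Example~\ref{ex1}: convert the $\vee$ to an absolute value, lift to a polynomial algebra by introducing one extra variable $y_{n+1}$ for $|p(\x)-q(\x)|$, compute the image of the lifting map together with the relevant semi-algebraic constraints, and then invoke Corollary~\ref{coro1}. The preliminary reduction is immediate from $2(a\vee b)=|a-b|+(a+b)$, which shows that $\A=\R[\x,p\vee q]\subset\A'=\R[\x,|p-q|]$ and that every element of $\A$ positive on $\K$ equals some $\tilde f\in\A'$ positive on $\K$; so it suffices to obtain the representation inside $\A'$.

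Next I would set up the lifted picture. With generators $h_1,\dots,h_n,h_{n+1}=(x_1,\dots,x_n,|p(\x)-q(\x)|)$ and variables $\y=(y_1,\dots,y_n,y_{n+1})$, the defining ideal of $\A'\cong\R[\y]/I$ is the radical ideal
\[ I=\bigl(y_{n+1}^2-(p(y_1,\dots,y_n)-q(y_1,\dots,y_n))^2\bigr),\]
and the map $h:\R^n\to\R^{n+1}$, $h(\x)=(\x,|p(\x)-q(\x)|)$, has image
\[ h(\R^n)=\{\y\in\R^{n+1}\ :\ y_{n+1}^2-(p-q)^2=0,\ y_{n+1}\geq 0\}.\]
This set is closed, as required by Corollary~\ref{coro1}. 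The quadratic module on the lifted side is
\[ \tilde Q=\Sigma\R[\y]^2+(M-\Vert\y\Vert^2)\Sigma\R[\y]^2+y_{n+1}\Sigma\R[\y]^2+\sum_{j=1}^m g_j(\y)\Sigma\R[\y]^2\pm\bigl(y_{n+1}^2-(p-q)^2\bigr)\Sigma\R[\y]^2,\]
whose pull-back under $h^\ast$ is precisely the module $Q$ that we want as positivity certificate in $\A'$. Note that $1$ is in the algebraic interior of $Q$ by Lemma~1 applied to $M-\Vert\x\Vert^2-|p(\x)-q(\x)|^2$, which we already assumed to be strictly positive on $\K$ (and hence belongs to $Q$ after possibly enlarging $M$).

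Then I would describe $h(\R^n)\cap P(\tilde Q)$ explicitly: it is the semi-algebraic subset of the closed ball $\{M-\Vert\y\Vert^2\geq0\}$ cut out by the polynomials $g_j(\y)$, $y_{n+1}$, and $\pm(y_{n+1}^2-(p-q)^2)$. Choosing these as the finite family $\{v_i\}$ of Corollary~\ref{coro1}, the corollary produces the representation of $\tilde f$ as a sum of the form
\[ \sigma+(M-\Vert\y\Vert^2)\sigma_0+y_{n+1}\sigma_1+\sum_{j=1}^m g_j\,\psi_j\pmod{I},\]
with $\sigma,\sigma_0,\sigma_1,\psi_j\in\Sigma\R[\y]^2$. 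Pulling back through $h^\ast$, the terms involving $\pm(y_{n+1}^2-(p-q)^2)$ are killed modulo $I$, and we land exactly on the claimed identity for $\tilde f(\x,|p(\x)-q(\x)|)$.

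The only subtlety is the verification that $h(\R^n)$ is exactly the semi-algebraic set given above (so that the positivity certificate from Corollary~\ref{coro1} is the right one): this is a direct factorisation of $y_{n+1}^2-(p-q)^2=(y_{n+1}-(p-q))(y_{n+1}+(p-q))$ combined with the sign constraint $y_{n+1}\geq 0$. The rest is bookkeeping in the quadratic module and an application of the general machinery in Theorem~\ref{thmain} and Corollary~\ref{coro1}; no new ideas beyond those already used in Example~\ref{ex1} are required.
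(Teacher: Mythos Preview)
Your proposal is correct and follows exactly the route the paper intends: the paper states this proposition inside Example~\ref{ex2} without a written proof, relying on the same reduction $2(a\vee b)=|a-b|+(a+b)$, the single-variable lift $y_{n+1}=|p-q|$ with the algebraic relation $y_{n+1}^2=(p-q)^2$ and sign constraint $y_{n+1}\geq 0$, and then an appeal to Corollary~\ref{coro1} just as in Example~\ref{ex1}. Your explicit description of $h(\R^n)$, of $h(\R^n)\cap P(\tilde Q)$, and of the family $\{v_i\}$ is precisely what the paper's framework calls for, so nothing is missing.
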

Of course a similar thing can be done with $p(\x)\wedge q(\x):=\min[p(\x),q(\x)]$ using 
$2(a\wedge b)=(a+b)-\vert a-b\vert$.
\end{Example}

\begin{Example}
\label{ex3}
Let $\uu=(u_i)_{i=1}^n\in\R[\x]^n$ and let $f$ be an element 
of the algebra $\mathcal{A}:=\R[\x,\Vert \uu(\x)\Vert_p]]$ which is positive on $\K$,
where 
$\Vert \x\Vert_p=(\sum_{i=1}^n\vert x_i\vert^p)^{1/p}$, with either $p\in\N$ or $p^{-1}\in\N$.

If $p/2\in\N$ one uses the algebraic lifting $\y\in\R^{n+1}$ with:
\[y_{n+1}^p=\sum_{i=1}^nu_i(\x)^p;\quad  y_{n+1}\geq 0.\]
If $p=2q+1$ with $q\in\N$, one uses the algebraic lifting $\y\in\R^{2n+1}$, with:
\[y_{n+i}^2=u_i(\x)^2;\quad y_{2n+1}^p=\sum_{i=1}^ny_{n+i}^p;\quad y_{n+i}\geq 0,
\:i=1,\ldots,n+1.\]
If $1/p\in\N$ one uses the algebraic lifting:
\[y_{n+i}^{2/p}=u_i(\x)^2;\quad y_{2n+1}=\left(\sum_{i=1}^ny_{n+i}\right)^{1/p};\quad y_{n+i}\geq 0,\:i=1,\ldots,n+1.\]
\begin{Proposition} Let $\K$ be as in (\ref{setk}) and let
$f$ be an element 
of the algebra $\mathcal{A}:=\R[\x,\Vert \uu(\x)\Vert_p]$ which is positive on $\K$.

{\rm (a)} If $p/2\in\N$ then $f$ can be written
\begin{eqnarray*}
f(\x,\Vert \uu(\x)\Vert_p)&=&\sigma(\x,\Vert \uu(\x)\Vert_p)\\
&&+(M-\Vert \x\Vert^2-\Vert \uu(\x)\Vert_p^2)\,\sigma_0(\x,\Vert \uu(\x)\Vert_p)\\
&& + \Vert \uu(\x)\Vert_p\,\sigma_1(\x,\Vert \uu(\x)\Vert_p) + \sum_{j=1}^mg_j(\x)\,\psi_j(\x,\Vert \uu(\x)\Vert_p)
\end{eqnarray*}
where $\sigma, \sigma_i, \psi_j \in \Sigma \R[y_1,\ldots,y_{n+1}]^2$.

{\rm (b)} If $p=2q+1$ with $q\in\N$ then $f$ can be written
\begin{eqnarray*}
f(\x,\Vert \uu(\x)\Vert_p)&=&\sigma(\x,\vert \uu(x)\vert,\Vert \uu(\x)\Vert_p)\\
&&+(M-\Vert \x\Vert^2-\Vert \uu(\x)\Vert^2-\Vert \uu(\x)\Vert_p^2)\,\sigma_0(\x,\vert \uu(\x)\vert,\Vert \uu(\x)\Vert_p)\\
&& + \sum_{i=1}^n\vert u_i(\x)\vert\,\sigma_i(\x,\vert \uu(\x)\vert,\Vert \uu(\x)\Vert_p)\\
&&+ \Vert \uu(\x)\Vert_p\,\phi(\x,\vert \uu(\x)\vert,\Vert \uu(\x)\Vert_p)\\
&& + \sum_{j=1}^mg_j(\x)\,\psi_j(\x,\vert \uu(\x)\vert,\Vert \uu(\x)\Vert_p)
\end{eqnarray*}
where $\sigma, \sigma_i, \psi_j,\phi \in \Sigma \R[y_1,\ldots,y_{2n+1}]^2$.

{\rm (c)} If $1<p^{-1}\in\N$ then $f$ can be written
\begin{eqnarray*}
f(\x,\Vert \uu(\x)\Vert_p)&=&
\sigma(\x,\vert u_1(x)\vert^p,\ldots,\vert u_n(x)\vert^p,\Vert \uu(\x)\Vert_p)\\
&&+(M-\Vert \x\Vert^2-\sum_{i=1}^n\vert u_i(\x)\vert^{2p}-\Vert \uu(\x)\Vert_p^2)\\
&&\times\sigma_0(\x,\vert u_1(x)\vert^p,\ldots,\vert u_n(x)\vert^p,\Vert \uu(\x)\Vert_p)\\
&& + \sum_{i=1}^n\vert u_i(\x)\vert^p\,
\sigma_i(\x,\vert u_1(x)\vert^p,\ldots,\vert u_n(x)\vert^p,\Vert \uu(\x)\Vert_p)\\
&& + \Vert \uu(\x)\Vert_p\,\phi(\x,\vert u_1(x)\vert^p,\ldots,\vert u_n(x)\vert^p,\Vert \uu(\x)\Vert_p)\\
&& + \sum_{j=1}^mg_j(\x)\,\psi_j(\x,\vert u_1(x)\vert^p,\ldots,\vert u_n(x)\vert^p,\Vert \uu(\x)\Vert_p)
\end{eqnarray*}
where $\sigma, \sigma_i, \phi,\psi_j \in \Sigma \R[y_1,\ldots,y_{2n+1}]^2$.
\end{Proposition}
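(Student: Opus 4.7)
My plan is to reduce each of the three parts to a direct application of Corollary \ref{coro1} applied to a carefully chosen polynomial lifting $\pi:\R[\y]\to\A$ in which the $p$-norm $\Vert\uu(\x)\Vert_p$ appears as one of the lifted coordinates. The algebraic relations between the generators must be genuinely polynomial, which dictates the exact shape of the lift case by case. The assumed ball bound $M-\Vert\x\Vert^2-\Vert\uu(\x)\Vert_p^2>0$ on $\K$ (together with its obvious enlargement when auxiliary generators are introduced) supplies the hypothesis of Lemma 1 placing the constant $1$ in the algebraic interior of the quadratic module $Q$.

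\textbf{Case (a).} When $p/2\in\N$ I would take generators $h_i(\x)=x_i$ for $i\le n$ and $h_{n+1}(\x)=\Vert\uu(\x)\Vert_p$. The crucial observation is that $y_{n+1}^p-\sum_{i=1}^n u_i(y_1,\ldots,y_n)^p$ is a bona fide polynomial in $\R[\y]$ because $p$ is even, and together with $y_{n+1}\ge 0$ it cuts out the closed graph $h(\R^n)\subset\R^{n+1}$ exactly. Intersecting with $M-\Vert\y\Vert^2\ge 0$ and the $g_j$ yields an explicit basic semi-algebraic description of $h(X)\cap P(\tilde{Q})$. Corollary \ref{coro1} then produces a sum-of-squares certificate in $\R[\y]$ with summands coming from these generators, including $\pm(y_{n+1}^p-\sum_i u_i^p)$; these equality terms collapse to zero under the pullback $h^\ast$, leaving exactly the formula of (a).

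\textbf{Cases (b) and (c).} Both proceed by the same template, except that we must first enlarge the set of generators so that the $p$-norm becomes a polynomial in the lifted variables. In (b), with $p=2q+1$ odd, $\sum_i u_i^p$ is not sign-controlled, so I would adjoin $y_{n+i}=\vert u_i(\x)\vert$ subject to the polynomial relations $y_{n+i}^2=u_i^2$ with $y_{n+i}\ge 0$; then $y_{2n+1}^p-\sum_i y_{n+i}^p$ is a polynomial equality and $y_{2n+1}\ge 0$ picks out the correct branch. In (c), with $k=1/p\in\N$, I would adjoin $y_{n+i}=\vert u_i(\x)\vert^p$ with $y_{n+i}^{2k}=u_i^2$, $y_{n+i}\ge 0$, and observe that $y_{2n+1}-(\sum_i y_{n+i})^k$ is polynomial since $k\in\N$. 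In each case the ball generator furnished by Lemma 1 equals $M$ minus the sum of squares of all generators, reproducing the displayed expressions $M-\Vert\x\Vert^2-\Vert\uu\Vert^2-\Vert\uu\Vert_p^2$ in (b) and $M-\Vert\x\Vert^2-\sum_i\vert u_i\vert^{2p}-\Vert\uu\Vert_p^2$ in (c). Applying Corollary \ref{coro1} in the enlarged polynomial ring and pulling back along $h^\ast$ yields the claimed representations.

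\textbf{Main difficulty.} The technical heart of the argument is checking that the polynomial (in)equalities listed above describe $h(X)\cap P(\tilde{Q})$ exactly, rather than a strictly larger semi-algebraic set of $\R^{n+1}$ or $\R^{2n+1}$. The sign constraints $y_{n+i}\ge 0$ and $y_{2n+1}\ge 0$ are indispensable, for polynomial equalities alone admit spurious branches (the negative real $2k$-th roots of $u_i^2$, or extra solutions of $y_{2n+1}^p=\sum_i y_{n+i}^p$ when raising to odd powers). Once this exact description is pinned down and the closedness of $h(X)$ is noted (each $h$ is the graph of a continuous map), the conclusion follows mechanically from Corollary \ref{coro1} combined with the fact that $h^\ast$ annihilates every generator of the defining ideal $I$ of $\A\cong\R[\y]/I$.
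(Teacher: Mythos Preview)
Your proposal is correct and follows exactly the route the paper intends: the paper does not give a separate proof of this proposition but simply records, immediately before stating it, the three algebraic liftings (the relations $y_{n+1}^p=\sum_i u_i^p$ with $y_{n+1}\ge 0$ when $p$ is even; the auxiliary $y_{n+i}^2=u_i^2$, $y_{n+i}\ge 0$ together with $y_{2n+1}^p=\sum_i y_{n+i}^p$ when $p$ is odd; and $y_{n+i}^{2/p}=u_i^2$, $y_{2n+1}=(\sum_i y_{n+i})^{1/p}$ when $1/p\in\N$), leaving the reader to apply Corollary~\ref{coro1} and pull back along $h^\ast$. Your write-up spells out precisely this argument, including the verification that the listed (in)equalities cut out $h(X)\cap P(\tilde Q)$ exactly and that the ideal generators vanish under $h^\ast$.
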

\end{Example}

\begin{Example}
\label{exsqrt}
Let $\A = \R[\x,\sqrt{p(\x)}]$. Of couse one here considers the new basic semi-algebraic set:
\begin{equation}
\label{newsetk}
\K'\,:=\,\K\,\cap\,\{\x\::\:g_{m+1}(\x)\geq0\}\quad\mbox{with}\quad\x\mapsto g_{m+1}(\x):=p(\x),
\end{equation}
and the lifting
\[y_{n+1}^2\,=\,p(\x);\quad y_{n+1}\geq 0.\]
\begin{Proposition} Let $f$ be an element 
of the algebra $\mathcal{A}:=\R[\x,\sqrt{p(\x)}]$ which is positive on $\K'$.
Then $f$ can be written
\begin{eqnarray*}
f(\x,\sqrt{p(\x)})&=&\sigma(\x,\sqrt{p(\x)})
+(M-\Vert \x\Vert^2-p(\x))\,\sigma_0(\x,\sqrt{p(\x)})\\
&& + \sum_{j=1}^{m+1}g_j(\x)\,\psi_j(\x,\sqrt{p(\x)}) +\sqrt{p(\x)}\,\phi(\x,\sqrt{p(\x)})\\
\end{eqnarray*}
where $\sigma, \sigma_i, \phi,\psi_j \in \Sigma \R[y_1,\ldots,y_{n+1}]$.
\end{Proposition}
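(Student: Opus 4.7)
The plan is to apply Theorem~\ref{thmain} and its effective form Corollary~\ref{coro1} to the generators $h_1(\x) = x_1,\ldots,h_n(\x) = x_n$ and $h_{n+1}(\x) = \sqrt{p(\x)}$, viewed as measurable functions on the closed set $X = \{\x \in \R^n : p(\x) \geq 0\}$. Writing $\y = (y_1,\ldots,y_{n+1})$ and $\y' = (y_1,\ldots,y_n)$, one has $\A \cong \R[\y]/I$ with radical ideal $I = \langle y_{n+1}^2 - p(\y')\rangle$, and the substitution map $h^\ast : \R[\y] \to \A$ sends $y_i \mapsto x_i$ and $y_{n+1} \mapsto \sqrt{p(\x)}$.

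Next I would form the quadratic module
$$Q = \Sigma\A^2 + (M - \Vert \x \Vert^2 - p(\x))\,\Sigma\A^2 + \sum_{j=1}^{m+1} g_j\,\Sigma\A^2$$
in $\A$, where $g_{m+1}(\x) := p(\x)$ encodes the extra inequality defining $\K'$. Since $M - \Vert \x \Vert^2 - p(\x) = M - \sum_{i=1}^{n+1} h_i(\x)^2$ belongs to $Q$, Lemma~1 guarantees that the constant function $1$ lies in the algebraic interior of $Q$; moreover $P(Q) = \K'$, so the hypotheses of Theorem~\ref{thmain} are satisfied for $f$.

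The lifted module $\tilde Q \subset \R[\y]$ is generated by $M - \Vert \y \Vert^2$ and the $g_j(\y)$. The image $h(X) = \{\y \in \R^{n+1} : y_{n+1} \geq 0,\ y_{n+1}^2 - p(\y') = 0\}$ is the graph of $\sqrt{p(\cdot)}$ over the closed set $X$ and is therefore closed in $\R^{n+1}$, so Corollary~\ref{coro1} applies with the polynomial inequalities
$$M - \Vert \y \Vert^2 \geq 0,\quad g_j(\y) \geq 0\ (j = 1,\ldots,m+1),\quad y_{n+1} \geq 0,\quad \pm(y_{n+1}^2 - p(\y')) \geq 0$$
giving an exact description of $h(X) \cap P(\tilde Q)$.

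Finally I would pull the resulting sums-of-squares identity back by $h^\ast$: the two signed copies of $y_{n+1}^2 - p(\y')$ map to $0$ in $\A$ and disappear; $M - \Vert \y \Vert^2$ becomes $M - \Vert \x \Vert^2 - p(\x)$; $y_{n+1}$ becomes $\sqrt{p(\x)}$, producing the isolated $\sqrt{p(\x)}\,\phi$ summand in the certificate; each $g_j(\y)$ becomes $g_j(\x)$; and the polynomial sum-of-squares coefficients remain sums of squares, now naturally viewed as elements of $\Sigma\R[y_1,\ldots,y_{n+1}]^2$ evaluated at $(\x,\sqrt{p(\x)})$. The only step that requires genuine care is the saturation that identifies $y_{n+1}$ as an additional generator of the cone describing $h(X) \cap P(\tilde Q)$, since this is what forces the extra $\sqrt{p(\x)}$ term to appear in the final expression; everything else is routine bookkeeping within the framework already set up.
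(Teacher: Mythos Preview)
Your proposal is correct and follows exactly the route the paper intends: it is the specialization of Theorem~\ref{thmain} and Corollary~\ref{coro1} to the lifting $y_{n+1}^2=p(\y'),\ y_{n+1}\ge 0$ spelled out in Example~\ref{exsqrt}, with the archimedean generator $M-\sum_i h_i^2=M-\|\x\|^2-p(\x)$ and the extra constraint $g_{m+1}=p$. The paper states the proposition without a separate proof, leaving precisely the bookkeeping you have written out (identifying $h(X)$, checking it is closed, describing $h(X)\cap P(\tilde Q)$, and pulling back so that the $\pm(y_{n+1}^2-p)$ terms vanish); nothing is missing.
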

\end{Example}

\subsection{Examples in algebras of nonsemi-algebraic functions}
\begin{Example}
Let $X=[0,\pi/2]$ and $\A = \R[x,\sin x]$. Again $\y=(y_1,y_2)$ but this time
the algebra $\A$ is isomorphic to the full
polynomial algebra $\R[\y]$. For illustration we consider the quadratic module
$Q \subset \A$ generated by the elements $x, \pi/2 -x, 1 - \sin^2 x.$ Although the
inequality $ 1 - \sin^2 x \geq 0$ is redundant, it is necessary to add it for having
the function $1$ in the algebraic interior of $Q$. 

As we wish to obtain a certificate
of positivity for a function $f(x,\sin x)$ belonging to $\A$, 
an algebraic description of the graph of $x\mapsto \sin x$ is in order. Choose the 
polynomials
$$ x\mapsto p_k(x) = \sum_{j=0}^k (-1)^k \frac{x^{2k+1}}{(2k+1)!}, \qquad k \geq 0.$$
It is well known that 
$$ p_{2k}(x) \geq p_{2k+2}(x) \geq  \sin x \geq p_{2k+1}(x) \geq p_{2k-1}(x), \quad
\forall x \in [0, \pi/2],$$
and that these polynomials converge uniformly to $\sin x$ on $[0,\pi/2]$.
A description of the graph of $\sin x$ is:
$$ h(X)\,=\,\{ (x, \sin x)\,:\ 0 \leq x \leq \pi/2 \} = $$ $$
\{ \y\in\R^2\,: \ 0 \leq y_1 \leq \pi/2 , \quad p_{2k+1}(y_1) \leq y_2 \leq p_{2k}(y_1),\quad k \geq 0\}.$$

In view of our main result, the conclusion is:

\begin{Proposition} Let $f$ be an element of the algebra $\R[x,\sin x]$ and assume that
$f>0$ for all $ x \in [0,\pi/2]$. Then there exists $k \geq 0$ such that
$$ x\mapsto f(x,\sin x) = \sigma(x,\sin x) + x \sigma_0(x,\sin x) + (\pi/2 -x) \sigma_2(x,\sin x) + $$ $$[p_{2k}(x)-\sin x]
\sigma_3(x, \sin x) + [\sin x - p_{2k+1}(x)] \sigma_4(x, \sin x),$$
where $\sigma, \sigma_i \in \Sigma \R[\y]^2.$
\end{Proposition}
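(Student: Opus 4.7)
My plan is to identify the algebra $\A = \R[x,\sin x]$ with the full polynomial ring $\R[\y]$ (as the author notes, since $\sin x$ is transcendental over $\R[x]$ the quotient ideal is trivial) and to reduce the assertion to an application of Putinar's Positivstellensatz on an appropriate ``sandwich'' neighborhood of the graph of $\sin$ on $[0,\pi/2]$.

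Concretely, I write $f(x,\sin x)=\tilde f(y_1,y_2)|_{y_1=x,\,y_2=\sin x}$ for a uniquely determined $\tilde f\in\R[\y]$. The image $h(X)=\Gamma:=\{(x,\sin x):x\in[0,\pi/2]\}$ is the compact graph of $\sin$, and by hypothesis $\tilde f>0$ on $\Gamma$. Continuity and compactness then give $\tilde f\geq\varepsilon$ on some open neighborhood $U\supset\Gamma$. For each $k\geq 0$ introduce the polynomial ``sausage''
$$S_k:=\{\y\in\R^2:0\leq y_1\leq \pi/2,\ p_{2k+1}(y_1)\leq y_2\leq p_{2k}(y_1)\}.$$
Since $p_{2k}$ and $p_{2k+1}$ converge uniformly to $\sin$ on $[0,\pi/2]$, the Hausdorff distance from $S_k$ to $\Gamma$ tends to zero, so for all $k$ sufficiently large one has $S_k\subset U$, and in particular $\tilde f\geq\varepsilon>0$ on $S_k$. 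I fix such a $k$.

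It then remains to invoke Putinar's Positivstellensatz for $\tilde f$ on $S_k$ with respect to the quadratic module $Q_k\subset\R[\y]$ generated by the four polynomials $y_1,\ \pi/2-y_1,\ p_{2k}(y_1)-y_2,\ y_2-p_{2k+1}(y_1)$. The central point to verify is that $Q_k$ is archimedean. The one-dimensional Putinar result applied to the submodule $\langle y_1,\ \pi/2-y_1\rangle$ of $\R[y_1]$ produces $N-y_1^2\in Q_k$ for a suitable $N>0$. Once $y_1^2$ has an algebraic upper bound, the identity $(p_{2k}(y_1)-y_2)+(y_2-p_{2k+1}(y_1))=p_{2k}(y_1)-p_{2k+1}(y_1)\in Q_k$ pinches $y_2$ between two polynomials in a now-bounded $y_1$, and a one-variable Putinar argument with polynomial coefficients yields $N'-y_2^2\in Q_k$. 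Summing gives the archimedean witness $(N+N')-y_1^2-y_2^2\in Q_k$.

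Granted the archimedean property of $Q_k$, Putinar's theorem provides the identity
$$\tilde f=\sigma+y_1\sigma_0+(\pi/2-y_1)\sigma_2+(p_{2k}(y_1)-y_2)\sigma_3+(y_2-p_{2k+1}(y_1))\sigma_4,$$
with $\sigma,\sigma_i\in\Sigma\R[\y]^2$; substituting $y_1=x$, $y_2=\sin x$ returns the claimed representation. The step I expect to be the most delicate is the archimedean verification, since $y_2$ can only be bounded inside $Q_k$ \emph{after} $y_1$ has been bounded, and one must carry out the one-variable Putinar arguments successively and with polynomial coefficients so that the final bound actually lies in the lifted quadratic module rather than merely holding pointwise on $S_k$.
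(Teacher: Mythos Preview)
Your proposal is correct and follows essentially the same route as the paper: lift to $\R[\y]$, describe the graph $\Gamma$ by the countable family of Taylor-sandwich inequalities, pick $k$ large enough that $\tilde f>0$ on the basic semi-algebraic set $S_k$, and invoke Putinar's Positivstellensatz in $\R[\y]$. Your explicit archimedean verification for $Q_k$ is in fact a welcome addition, since the paper initially inserts $1-\sin^2 x$ among the generators ``for having the function $1$ in the algebraic interior of $Q$'' but then drops the corresponding term from the stated representation; your two-step argument (bound $y_1$ from $y_1,\ \pi/2-y_1$, then bound $y_2$ via $p_{2k}(y_1)-y_2$ and $y_2-p_{2k+1}(y_1)$) shows directly why that extra generator is unnecessary.
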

\end{Example}
\begin{Example}
Let $X=\R$ and $\A = \R[x, e^{ax}, e^{bx}]$. Two distinct cases, corresponding to $a,b$ commensurable
or not, will be treated. This dichotomy is well known to the experts in the control theory of delay systems.

To fix ideas, we assume that $b>a>0$ and the base is the interval $[0,1]$. Denote
$y_1 = x, y_2 = e^{ax}, y_3 = e^{bx}$. We will impose the archimedeanity constraints
$$ x \geq 0, 1 -x \geq 0,\ \  y_2 \geq 0, e^a - y_2 \geq 0, \ \ y_3 \geq 0, e^b -y_3 \geq 0.$$
Along with these we have to consider the polynomial approximations of the two exponentials:
$$ p_n(ax) = \sum_{k=0}^n \frac{a^k x^k}{k!} \leq y_2 \leq  p_n(ax) + \frac{a^{n+1} e^a x^{n+1}}{(n+1)!}$$
and
$$  p_n(bx) = \sum_{k=0}^n \frac{b^k x^k}{k!} \leq y_3 \leq  p_n(bx) + \frac{b^{n+1} e^b x^{n+1}}{(n+1)!}.$$ The reader will recognize above an upper bound of the remainder of Taylor's series approximation
of the exponential function.\\

{\it Case I.\ The frequencies $a,b$ are not commensurable.} Then there is no algebraic
relation among the variables $y_1,y_2,y_3$ and a function $ f \in  \R[x, e^{ax}, e^{bx}]$
which is positive on the interval $[0,1]$ is decomposable into a weighted sum of squares
generated by $x, 1-x, e^{ax}, e^a-e^{ax}, e^{bx}, e^b-e^{bx}, e^{ax} - p_n(ax),  p_n(ax) + \frac{a^{n+1} e^a x^{n+1}}{(n+1)!}-e^{ax}, e^{bx} -p_n({bx}),  p_n(bx) + \frac{b^{n+1} e^b x^{n+1}}{(n+1)!}- e^{bx},$
for a large enough $n$.

Note that the above list of weights is redundant, as for instance the constraint
$e^{ax}>0$ follows from $ e^{ax} - p_n(ax)>0$, and so on. In practice it will be important to
identify a minimal set of positivity constraints.\\

{\it Case II. The frequencies $a,b$ are commensurable.} Assume that $Na = Mb$ where
$N,M$ are positive integers. Then the decomposition above of a positive (on $[0,1]$) element
 $f \in  \R[x, e^{ax}, e^{bx}]$ holds modulo the algebraic relation $y_2^N = y_3^M$.
 
\end{Example}

\begin{Example} When dealing with trigonometric (real valued)
polynomials in $n$ variables, one has to consider the algebra
$\A = \R[ \sin \theta_i, \cos \theta_i], \  1 \leq i \leq n.$ The standard lifting
proposed in the present article is
$$ y_i = \sin \theta_i, \ \  z_i = \cos \theta_i,\ \  1 \leq i \leq n,$$
modulo the ideal generated by the relations
\begin{equation}\label{torus}
y_i^2 + z_i^2 -1 = 0, \ \  1 \leq i \leq n.\end{equation}
Note that in this case every quadratic module $Q \subset \A$
in the variables
$\sin \theta_i, \cos \theta_i$ lifts to an archimedean quadratic module $\hat{Q}$
in the variables $y_i, z_i$. Specifically, relations (\ref{torus}) assure that
the constant function $1$ belongs to the algebraic interior of $\hat{Q}$.
\end{Example}

\section{Representation of positive semi-algebraic functions}

We now consider an algebra of functions which forms an important subclass of 
semi-algebraic functions.

Let $(g_j)_{j=1}^m\subset\R[\x]$ and let
$\K\subset\R^n$ be the basic semi-algebraic set
\begin{equation}
\label{setkk}
\K\,:=\,\{\x\in\R^n\::\: g_j(\x)\geq 0,\:j=1,\ldots,m\}.
\end{equation}

Recall that $f\,:\K\to\R$ is a semi-algebraic function if its graph 
$\Psi_f:=\{(\x,f(\x))\,:\,\x\in\K\}$
is a semi-algebraic set of $\R^n\times\R$.

Recall the notation
\[a\vee b=\max[a,b];\quad a\wedge b:=\min [a,b];\quad\vert \x\vert =(\vert x_1\vert,\ldots,\vert x_n\vert)\in\R^n,\]
and let $\A$ be the algebra of fonctions $f\,:\K\to\R$
generated by finitely many of the dyadic operations 
$\{+,\times,/,\vee,\wedge\}$ 
and monadyc operations $\vert\cdot\vert$ and $(\cdot)^{1/p}$ ($p\in\N$)
on polynomials.

For instance, with $u,v,\ell\in\R[\x]$ and $h\in\R[\x]^2$, $q\in\N$, the function
\begin{equation}
\label{demo}
\x\mapsto f(\x):=(\vert u(\x)+v(\x)\vert ^{1/p})\wedge\left(\sqrt{\Vert h(\x)\Vert_{2q+1}+1/\ell(\x)}\right)\end{equation}
is a member of $\A$ (assuming of course that $\ell(\x)>0$ or $\ell(\x)<0$
for all $\x\in\K$).

\begin{Definition}
A function $f\in\A$ is said to have a {\it basic semi-algebraic 
lifting} (in short a "\bsal"), or $f$ is {\it basic semi-algebraic} (b.s.a.) if there exist $p,s\in\N$, polynomials $(h_k)_{k=1}^s\subset\R[\x,y_1,\ldots,y_p]$
and a basic semi-algebraic set
\[\K'_f:=\{(\x,\y)\in\R^{n+p}\::\: \x\in\K;\:h_k(\x,\y)\geq0,\quad k=1,\ldots,s\}\]
such that the graph of $f$ (denoted $\Psi_f$) satisfies:
\begin{equation}
\Psi_f:=\{(\x,f(\x))\::\:\x\in\K\}\,=\,\{(\x,y_p)\::\:(\x,\y)\in\K'_f\}.
\end{equation}
In other words,
$\Psi_f$ is an orthogonal projection of the basic semi-algebraic set $\K'_f$
which lives in the lifted space $\R^{n+p}$.
\end{Definition}
Hence by the projection theorem of real algebraic geometry,
a function $f\in\A$ that admits a \bsal~ is semi-algebraic.
\begin{Lemma}
\label{sal}
Let $\K$ be the basic semi-algebraic set in (\ref{setk}).
Then every well-defined $f\in\A$ has a basic semi-algebraic lifting.
\end{Lemma}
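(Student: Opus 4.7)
The plan is to proceed by structural induction on the number of algebraic operations used to express $f$ in terms of polynomials. The base case $f \in \R[\x]$ is immediate: set $p=1$ and impose the single equation $y_1 = f(\x)$ as two polynomial inequalities $y_1 - f(\x) \geq 0$ and $f(\x) - y_1 \geq 0$; the orthogonal projection of the resulting $\K'_f$ onto $(\x, y_1)$ is visibly the graph $\Psi_f$.

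For the inductive step, assume $g, h \in \A$ already admit basic semi-algebraic liftings $\K'_g \subset \R^{n+p_g}$ and $\K'_h \subset \R^{n+p_h}$ whose last coordinates $y_{p_g}$ and $z_{p_h}$ realize $g(\x)$ and $h(\x)$. First I form the fiber product of these liftings over the common base variable $\x$, producing a basic semi-algebraic set in $\R^{n+p_g+p_h}$ whose defining polynomials are just the concatenation of those of $\K'_g$ and $\K'_h$. Then I adjoin one fresh coordinate $w$, meant to realize $f(\x)$, together with a small set of polynomial constraints relating $w$ to $y_{p_g}$ and $z_{p_h}$ that encode the relevant operation. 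For the dyadic operations $+, \times, /$ the relation is, respectively, $w = y_{p_g} + z_{p_h}$, $w = y_{p_g} z_{p_h}$, and $w\, z_{p_h} = y_{p_g}$, each written as two inequalities. For $\vee$ I add $w - y_{p_g} \geq 0$, $w - z_{p_h} \geq 0$, and $(w - y_{p_g})(w - z_{p_h}) = 0$; the case $\wedge$ is symmetric (or one can invoke $2(g \vee h) = (g + h) + |g - h|$ and reduce to operations already handled). For the monadic operations I add $w \geq 0$ together with $w^2 - y_{p_g}^2 = 0$ for $|\cdot|$ and $w^p - y_{p_g} = 0$ for $(\cdot)^{1/p}$. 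In each case the enlarged set is visibly basic semi-algebraic, and its projection onto $(\x, w)$ is exactly $\Psi_f$.

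The main subtlety, and the reason the hypothesis of well-definedness is stated explicitly, appears in the division and even-root steps: the relation $w z_{p_h} = y_{p_g}$ admits spurious solutions precisely at zeros of $h$, and $w^p = y_{p_g}$ has no real solution where $g < 0$ when $p$ is even. Well-definedness of $f$ on $\K$ excludes both possibilities, forcing $h$ to be nonvanishing on $\K$ and $g \geq 0$ on $\K$ respectively, so that the fiber of the new lifted set over each $\x \in \K$ is the singleton $\{(\y, \z, w)\}$ with $w = f(\x)$ and the projection recovers the graph exactly. Since every element of $\A$ is obtained from polynomials by finitely many applications of these seven operations, the induction terminates in finitely many steps and yields a basic semi-algebraic lifting for $f$.
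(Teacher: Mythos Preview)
Your proof is correct and follows essentially the same inductive strategy as the paper: one extra lifting variable per operation, with the obvious polynomial relation encoding that operation. The only noteworthy difference is your direct encoding of $\vee$ via $w\geq y_{p_g}$, $w\geq z_{p_h}$, $(w-y_{p_g})(w-z_{p_h})=0$, whereas the paper reduces $\vee$ and $\wedge$ to the already-treated operations through $2(f\vee g)=(f+g)+|f-g|$; both work, and your explicit discussion of the base case, the fiber-product construction, and the well-definedness caveats for division and even roots is more careful than the paper's terse ``obvious''.
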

\begin{proof}
It is obvious that the sum $f+g$ and multiplication $fg$
of two b.s.a. functions $f,g\in\A$, is b.s.a.
So let $f\in\A$ be b.s.a., i.e.,
\[\Psi_f=\{(\x,y_p)\::\:(\x,\y)\in\K'_f\subset\R^{n+p}\}\]
for some integer $p$ and some basic semi-algebraic set $\K'_f$.
Then:
\begin{itemize}
\item If $f\neq 0$ on $\K$ then $f^{-1}$ has the \bsal
\[\Psi_{f^{-1}}=\{(\x,y_{p+1})\::\:(\x,(y_1,\ldots,y_p))\in\K';\:y_{p+1}y_p=1\}\]
and so $f/g$ is b.s.a. whenever $f$ and $g$ are.
\item With $f\geq0$ on $\K$ and $q\in\N$, $f^{1/q}$ has the \bsal
\[\Psi_{f^{1/q}}=\{(\x,y_{p+1})\::\:(\x,(y_1,\ldots,y_p))\in\K';\:y_{p+1}^{q}=y_p;\:y_{p+1}\geq 0\}\]
\item $\vert f\vert$ has the \bsal
\[\Psi_{\vert f\vert}=\{(\x,y_{p+1})\::\:(\x,(y_1,\ldots,y_p))\in\K';\:y_{p+1}^2=y_p^2;\,y_{p+1}\geq0\}\]
\item If $f$ and $g$ have a \bsal then so does $f\wedge g$ (resp. $f \vee g$) because
$2(f\wedge g)=(f+g)-\vert f-g\vert$
(resp. $2(f\vee g)=(f+g)+\vert f-g\vert$).
\end{itemize}
\end{proof}
\begin{Example}
\label{exlift}
Let $f\in\A$ be the function defined in (\ref{demo}), and let
$\K'_f\subset\R^{n+8}$ be the basic semi-algebraic set defined by the constraints:
\begin{eqnarray*}
\K'_f=\left\{(\x,\y)\in\R^{n+9}\::\quad\right.g_j(\x)&\geq&0,\quad j=1,\ldots,m;\\
y_i^2-h_i(\x)^2&=&0;\quad y_i\geq0;\quad i=1,2.\\
y_3^{2q+1}-y_1^{2q+1}-y_2^{2q+1}&=&0;\quad y_3\geq 0\\
y_4\,\ell(\x) &=&1;\\
y_5^2-y_4-y_3&=&0;\quad y_5\geq0\\
y_6^{2p}-(u(\x)+v(\x))^2&=&0;\quad y_6\geq0\\
y_7^2-(y_6-y_5)^2&=&0;\quad y_7\geq 0\\
2y_8-(y_6+y_5)+y_7&=&0\left.\right\}
\end{eqnarray*} 
Then $\Psi_f=\{(\x,y_8)\::\:(\x,\y)\in\K'_f\}$. 
\end{Example}
Observe that with each variable $y_k$ of the lifting $\y$ is associated 
a certain function $v_k\in\A$. For instance in Example \ref{exlift}:
\[y_i\to v_i(\x):=\vert h_i(\x)\vert,\:i=1,2;\:y_3\to v_3(\x)
:=(v_1(\x)^{2q+1}+v_2(\x)^{2q+1})^{1/(2q+1)}\]
\[y_4\to v_4(\x):=(\ell(\x)^{-1};\:y_5\to v_5(\x):=\sqrt{v_3(\x)+v_4(\x)}\]
\[y_6\to v_6(\x):=\vert u(\x)+v(\x)\vert^{1/p};\:y_7\to v_7(\x):=\vert v_6(\x)-v_5(\x)\vert\]
and $y_8\to v_8(\x):=f(\x)$.
\vspace{0.2cm}

Next, with $\K\subset\R^n$ as in (\ref{setkk}), consider the set $\s\subseteq\K$
defined by:
\begin{equation}
\label{sets}
\s:=\{\x\in\K\,:\,h_\ell(\x)\geq 0,\:\ell=1,\ldots,s\}
\end{equation}
for some finite family $(h_k)_{k=1}^s\subset\A$. 
\begin{Lemma}
\label{lemma-setS}
The set $\s$ in (\ref{sets}) is a semi-algebraic set which is the projection of a 
lifted basic semi-algebraic set. It can be written
\begin{eqnarray}
\nonumber
\s\,=\,\left\{\x\in\R^{n}\::\:\exists \y\in\R^t\right.&\mbox{s.t.}&\x\in\K;\quad u_k(\x,\y)=0,\quad k=1,\ldots,r\\
\label{setS}
&&\left. y_j\geq0,\quad j\in J\right\}
\end{eqnarray}
for some integers $r,t$, some polynomials $(u_k)_{k=1}^r\subset\R[\x,\y]$ and some
index set $J\subseteq\{1,\ldots,t\}$.

With the lifting $\y$ is associated a vector
of functions $\vv\in\A^t$.
\end{Lemma}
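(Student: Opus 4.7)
The plan is to stack the basic semi-algebraic liftings (\bsal's) of the individual constraint functions $h_1,\ldots,h_s$ into a single lifting of $\s$, and then read off the additional inequalities $h_\ell(\x)\geq 0$ as nonnegativity constraints on the terminal variables of each lifting.

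First I would apply Lemma \ref{sal} to each $h_\ell\in\A$ to obtain a \bsal~$\K'_{h_\ell}\subset\R^{n+p_\ell}$ whose projection onto $(\x,y^{(\ell)}_{p_\ell})$ equals the graph $\Psi_{h_\ell}$. The essential observation is that the recursive construction in the proof of Lemma \ref{sal} never introduces general polynomial inequalities: each monadic or dyadic operation among $(\cdot)^{-1},(\cdot)^{1/q},|\cdot|,\vee,\wedge$ adjoins only a single polynomial equality (e.g.\ $y_{p+1}y_p=1$, $y_{p+1}^q=y_p$, $y_{p+1}^2=y_p^2$) together with, in three of the four cases, a nonnegativity constraint $y_{p+1}\geq 0$ on the newly added lift variable. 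Therefore the \bsal~of every element of $\A$ can be put in the normal form
\[
\K'_{h_\ell}=\{(\x,\y^{(\ell)})\,:\,\x\in\K,\ u^{(\ell)}_i(\x,\y^{(\ell)})=0,\ i=1,\ldots,r_\ell,\ y^{(\ell)}_j\geq 0,\ j\in J_\ell\},
\]
with the terminal coordinate $y^{(\ell)}_{p_\ell}$ representing $h_\ell(\x)$ on this set.

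Next I would concatenate the liftings: set $t=\sum_{\ell=1}^s p_\ell$ and $\y=(\y^{(1)},\ldots,\y^{(s)})\in\R^t$, merge the lists of polynomial equalities into a single family $u_1,\ldots,u_r$, and let $J\subseteq\{1,\ldots,t\}$ be the union (after reindexing) of all the $J_\ell$ together with the $s$ indices corresponding to the terminal coordinates $y^{(\ell)}_{p_\ell}$. The additional nonnegativity constraints on these terminal indices encode precisely the defining inequalities $h_\ell(\x)\geq 0$ of $\s$, so the $\x$-projection of the resulting lifted set is exactly $\s$. This yields the representation (\ref{setS}), and in particular $\s$ is semi-algebraic by Tarski--Seidenberg.

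For the final assertion, I would invoke the observation following Example \ref{exlift}: every lift variable $y_j$ is introduced as a named intermediate quantity in the recursion, hence is canonically identified with a function $v_j\in\A$ obtained by applying the corresponding elementary operation to the previously constructed $v_i$'s (with $v_j=h_\ell$ at the terminal indices). Assembling these gives the vector $\vv=(v_1,\ldots,v_t)\in\A^t$. The only (mild) obstacle is justifying the normal-form claim in the first step, but this is a direct case check against the four items in the proof of Lemma \ref{sal} plus the base case of a polynomial $f$, whose graph is already the zero set of the polynomial equality $y-f(\x)=0$; everything else is bookkeeping.
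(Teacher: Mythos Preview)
Your proposal is correct and follows the same approach as the paper: apply Lemma \ref{sal} to each $h_\ell$, stack the resulting liftings over a common $\x$, and encode the constraints $h_\ell(\x)\geq 0$ as nonnegativity of the terminal lift variables. You are in fact more careful than the paper on one point: the paper's proof ends with ``Obviously it is of the form (\ref{setS})'' without saying why the lifting involves only polynomial \emph{equalities} together with sign constraints on coordinates of $\y$, whereas you trace this explicitly to the case analysis in the proof of Lemma \ref{sal} and the base case $y-f(\x)=0$ for polynomials.
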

\begin{proof}
By Lemma \ref{sal}, each function $h_\ell$ is b.s.a. and so there is an integer $n_\ell$
and a basic
semi-algebraic set $\K'_\ell\subset\R^{n+n_\ell}$ such that:
\[\Psi_{h_\ell}\,=\,\{(\x,y^{\ell}_{n_\ell})\::\:(\x,\y^\ell)\in\K'_\ell\},\quad \ell=1,\ldots,s,\]
with $\y^\ell\in\R^{n_\ell}$, $\ell=1,\ldots,s$.
Therefore,
\[\s\,=\,\{\x\in\R^n\::\:
\exists \,\y^1,\ldots,\y^s\mbox{ s.t. }(\x,\y^\ell)\in \K'_\ell;\:y^\ell_{n_\ell}\geq0;\quad \ell=1,\ldots,s\},\]
and so $\s$ is the projection of a lifted basic semi-algebraic set 
which lives in $\R^{n+\sum_\ell n_l}$.
Obviously it is of the form (\ref{setS}).
\end{proof}
Next we state the main result of this section.
\begin{Theorem}
Let $\K\subset\R^n$ be a compact basic semi-algebraic set as in (\ref{setkk}) and 
let $\s$ be as in (\ref{sets}) for some finite family $(h_\ell)\subset\A$.
Let $f\in\A$ have the \bsal
\begin{eqnarray*}
\Psi_f\,=\,\left\{(\x,z_{m_f})\right.&:&\x\in\K;\:q_\ell(\x,\z)\,=\, 0,\quad \ell=1,\ldots,t_f\\
&&\left. z_j\geq 0,\quad j\in J_f\right\}\end{eqnarray*}
(where $\z=(z_1,\ldots,z_{m_f})$ and $J\subseteq\{1,\ldots,m_f\}$) and let 
$\pp=(\pp_1,\ldots,\pp_{m_f})\in\A^{m_f}$ be the vector of functions associated with
the lifting $\z$.

If $f$ is positive on $\s$ then
\begin{eqnarray}
\nonumber
f(\x)&=&\sigma_0(\x,\vv(\x),\pp(\x))+\sum_{j=1}^m\sigma_j(\x,\vv(\x),\pp(\x))\,g_j(\x)\\
&&+\sum_{k\in J}\psi_k(\x,\vv(\x),\pp(\x))\vv_k(\x)\\
&&+\sum_{\ell\in J_f}\phi_\ell(\x,\vv(\x),\pp(\x))\,\pp_\ell(\x)\\
\label{put}
&&+\phi_0(\x,\vv(\x),\pp(\x))(M-\Vert (\x,\vv(\x),\pp(\x))\Vert^2)
\end{eqnarray}
for some $(\sigma_j),(\psi_k),(\phi_\ell)\subset\Sigma\R[\x,\y,\z]$ and a sufficiently large $M>0$.
\end{Theorem}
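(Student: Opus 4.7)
The plan is to lift the problem to a purely polynomial one in a Euclidean space and then apply Putinar's Positivstellensatz \cite{Put}. Using Lemma \ref{lemma-setS} to describe $\s$ in a lifted space and the \bsal~ of $f$, I would introduce in $\R^{n+t+m_f}$ the compact basic semi-algebraic set
\begin{align*}
\tilde{\s}:=\bigl\{(\x,\y,\z)\,:\,&g_j(\x)\geq0,\ j=1,\ldots,m;\ u_k(\x,\y)=0,\ k=1,\ldots,r;\\
&y_j\geq0,\ j\in J;\ q_\ell(\x,\z)=0,\ \ell=1,\ldots,t_f;\\
&z_\ell\geq0,\ \ell\in J_f;\ M-\Vert(\x,\y,\z)\Vert^2\geq0\bigr\},
\end{align*}
with $M$ chosen strictly larger than $\sup_{\x\in\K}(\Vert\x\Vert^2+\Vert\vv(\x)\Vert^2+\Vert\pp(\x)\Vert^2)$, a quantity that is finite because $\K$ is compact and the semi-algebraic functions $\vv,\pp$ are continuous there.

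The decisive observation is that on $\tilde{\s}$ the first block of constraints (from Lemma \ref{lemma-setS}) forces $\x\in\s$ together with $\y=\vv(\x)$, while the second block (from the \bsal~ of $f$) forces $\z=\pp(\x)$; in particular $z_{m_f}=\pp_{m_f}(\x)=f(\x)$. Hence the polynomial $z_{m_f}\in\R[\x,\y,\z]$ coincides with the function $f$ along $\tilde{\s}$ and, by hypothesis, is strictly positive there. The ball constraint makes the quadratic module generated by all these inequalities---with each equality $u_k=0$ or $q_\ell=0$ encoded as the two opposite inequalities $\pm u_k\geq0$, $\pm q_\ell\geq0$---archimedean, so Putinar's theorem applies and furnishes a representation
\begin{align*}
z_{m_f}\,=\,&\sigma_0+\sum_{j=1}^m\sigma_j g_j+\sum_{k\in J}\psi_k y_k+\sum_{\ell\in J_f}\phi_\ell z_\ell\\
&+\phi_0\bigl(M-\Vert(\x,\y,\z)\Vert^2\bigr)+\sum_k\alpha_k u_k(\x,\y)+\sum_\ell\beta_\ell q_\ell(\x,\z),
\end{align*}
where $\sigma_0,\sigma_j,\psi_k,\phi_\ell,\phi_0\in\Sigma\R[\x,\y,\z]^2$ and $\alpha_k,\beta_\ell\in\R[\x,\y,\z]$ are arbitrary (the latter produced by the $\pm$ encoding of each equality).

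It then suffices to pull this identity back along the graph map $\x\mapsto(\x,\vv(\x),\pp(\x))$: by construction of the two liftings the polynomials $u_k(\x,\vv(\x))$ and $q_\ell(\x,\pp(\x))$ are identically zero on $\K$, so the ideal contributions disappear, the left-hand side becomes $\pp_{m_f}(\x)=f(\x)$, and the surviving SOS-weighted summands are exactly those listed in~(\ref{put}). The only genuinely delicate step is verifying the archimedean property of the lifted quadratic module; this is handled uniformly by the single ball inequality $M-\Vert(\x,\y,\z)\Vert^2\geq0$, whose validity on the graph is secured by compactness of $\K$ and continuity of $\vv,\pp$. Everything else is a direct assembly of Lemma \ref{lemma-setS}, Lemma \ref{sal}, and the classical Positivstellensatz of \cite{Put}.
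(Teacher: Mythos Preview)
Your proposal is correct and follows exactly the strategy the paper sets up in Section~2 (Theorem~\ref{thmain} and Corollary~\ref{coro1}) and implicitly expects here: lift the data to a basic semi-algebraic set in $\R^{n+t+m_f}$, invoke Putinar's Positivstellensatz on the polynomial $z_{m_f}$, and pull back along the graph map $\x\mapsto(\x,\vv(\x),\pp(\x))$, where the equality constraints $u_k,q_\ell$ vanish identically. The paper in fact gives no separate proof of this theorem---it is understood as a direct specialization of the preceding framework together with Lemmas~\ref{sal} and~\ref{lemma-setS}---and your write-up supplies precisely those missing details.

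One small remark: your justification that $M$ can be chosen finite relies on continuity of $\vv,\pp$ on $\K$. This is true because the generators of $\A$ are built from continuous monadic and dyadic operations, but it is worth saying explicitly (semi-algebraic alone does not imply continuous). Alternatively, boundedness of each lifting coordinate already follows inductively from the constraints in the \bsal~ themselves together with compactness of $\K$, which is how the paper tacitly handles it in the optimization section when choosing $\theta$.
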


\begin{Example} The following simple example illsutrates why
the b.s.a. functions are much nicer than general semi-algebraic functions.
Let $\A = \R[x, \chi_S]$ be the algebra generated by the polynomials in the variables 
$x \in \R$ and the characteristic function of a basic algebraic set, say for simplicity given by a single
inequality
$$ S = \{ x \in \R;\ h(x) \geq 0 \}$$
where $h \in \R[x]$ is a non-zero polynomial. An element $f \in \A$ is of the form
$f = f_1 \chi_S + f_2 (1-\chi_S),$ with $f_1,f_2 \in \R[x]$. Let $Q \in \A$ be an archimedean
quadratic module with associated positivity set $P(Q)$. The question whether
$f|_{P(Q)}>0$ reduces therefore to $(f_1)|_{P(Q)\cap S} >0$ and independently\\
$(f_2)|_{P(Q) \setminus S}>0$. While the first condition is reducible to the above setting
$$ (f_1)|_{P(Q + h \Sigma \R[\x]^2)}>0,$$
the second condition involves the positivity of the polynomial $f_2$ on a {\it non-closed}
semi-algebraic set. This case can be resolved in general only with the help of Stengle's
Positivstellensatz, see for details \cite{BCR,PD}.

To be more explicit, consider $h(x) = \|x\|^2 -1$ and $Q = \Sigma \R[x]^2+
(2-\|x\|^2)\Sigma \R[x]^2$. We are then led to consider the problem
\begin{equation}\label{Stengle}
 f_2(x)>0 \ \ \ \ {\rm whenever} \ \ \ \ \|x\|^2<1,
 \end{equation}
where $f_2$ is a polynomial in $\x$. Stengle's Theorem states that 
the implication (\ref{Stengle}) holds if and only if there exists a positive integer
$N$ and $\sigma_i \in \Sigma \R[\x]^2, \ \ 0 \leq i \leq 3,$ such that
$$ f_2 (\sigma_0 + (1-\|x\|^2)\sigma_1 ) = (1-\|x\|^2)^{2N} + \sigma_2 + (1-\|x\|^2)\sigma_3.$$ 
In conclusion, even if the graph of the characteristic function $\chi_S$ is semi-algebraic,
a departure from the requirement that it is a (lifted) basic semi-algebraic set implies that
the sought certificate of positivity of an element $f \in \A$ requires denominators.

\end{Example}

\section{Semi-algebraic optimization}

In this section we consider the application of
preceding results to optimization of a function $f$
of the algebra $\A$, over a compact set $\s$ as in (\ref{sets}) 
(with $\K$ as in (\ref{setkk})) and whose
defining inequality constraints are also defined with functions in this algebra $\A$.
Recall that by Lemma \ref{lemma-setS}, the set $\s$ can also be written as the projection
(\ref{setS}) of a basic semi-algebraic set in a lifted space $\R^{n+t}$.

Consider the optimization problem:
\begin{equation}
\p:\quad f^*:=\:\min_\x\: \{\,f(\x)\::\: \x\in\s\:\}
\end{equation}
\begin{Proposition}
With $\K$ in (\ref{setkk}) compact and with $\s$ as in (\ref{sets})-(\ref{setS}),
let $f\in\A$ have the \bsal
\begin{eqnarray*}
\Psi_f\,=\,\left\{(\x,z_f)\right.&:&\x\in\K;\:q_\ell(\x,\z)\,=\, 0,\quad \ell=1,\ldots,t_f\\
&&\left. z_j\geq 0,\quad j\in J_f\right\}\end{eqnarray*}
(where $\z=(z_1,\ldots,z_{m_f})$ and $J\subseteq\{1,\ldots,m_f\}$) and let 
$\pp=(\pp_1,\ldots,\pp_{m_f})\in\A^{m_f}$ be the vector of functions associated with
the lifting $\z$. Then $\p$ is also the polynomial optimization:
\[f^*\,=\,\min_{\x,\y,\z}\:\{z_{m_f}\::\: (\x,\y,\z)\,\in\,\Omega\}\]
where $\Omega\subset\R^{n+t+m_f}$ is the basic semi-algebraic set
\begin{equation}
\label{setomega}
\begin{array}{lrll}
\Omega=&\{(\x,\y,\z)\::\:&g_j(\x)&\geq0,\quad j=1,\ldots,m\\
&&u_k(\x,\y)&=0,\quad k=1,\ldots,r\\
&&q_\ell(\x,\z)&=0,\quad \ell=1,\ldots,t_f\\
&&y_k,z_j&\geq0,\quad k\in J,\:j\in J_f\:\}
\end{array}
\end{equation}
\end{Proposition}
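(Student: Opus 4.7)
The plan is to prove the two optimization problems have the same optimal value by exhibiting a correspondence between feasible points that preserves the objective. Concretely, I will show both $\min_\Omega z_{m_f} \le f^\ast$ and $\min_\Omega z_{m_f} \ge f^\ast$, using only the lifted representation of $\s$ supplied by Lemma \ref{lemma-setS} and the basic semi-algebraic lifting of $f$ guaranteed by Lemma \ref{sal}.

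For the inequality $\min_\Omega z_{m_f} \le f^\ast$, fix an arbitrary $\x \in \s$. By the representation (\ref{setS}) there exists $\y \in \R^t$ such that $g_j(\x) \ge 0$ for $j = 1,\ldots,m$, $u_k(\x,\y) = 0$ for $k = 1,\ldots,r$, and $y_k \ge 0$ for $k \in J$. On the other hand, since $(\x, f(\x))$ lies in $\Psi_f$ and $\Psi_f$ is the orthogonal projection of the basic semi-algebraic set defining the \bsal{} of $f$, there exists $\z = (z_1,\ldots,z_{m_f}) \in \R^{m_f}$ with $q_\ell(\x,\z) = 0$ for $\ell = 1,\ldots,t_f$, $z_j \ge 0$ for $j \in J_f$, and $z_{m_f} = f(\x)$. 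Concatenating, the triple $(\x,\y,\z)$ satisfies every constraint of (\ref{setomega}), hence lies in $\Omega$, and its objective value equals $f(\x)$. Minimizing over $\x \in \s$ yields the desired inequality.

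For the reverse inequality $\min_\Omega z_{m_f} \ge f^\ast$, let $(\x,\y,\z) \in \Omega$ be arbitrary. The constraints $g_j(\x) \ge 0$, $u_k(\x,\y) = 0$ and $y_k \ge 0$ for $k \in J$ are exactly the witnesses required by (\ref{setS}) to guarantee $\x \in \s$. Similarly, the constraints $q_\ell(\x,\z) = 0$ and $z_j \ge 0$ for $j \in J_f$, combined with $\x \in \K$, place the pair $(\x, z_{m_f})$ on the graph $\Psi_f$ of $f$; hence $z_{m_f} = f(\x) \ge f^\ast$. Taking the infimum over $(\x,\y,\z) \in \Omega$ concludes the proof.

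No serious obstacle is expected here: the proposition is essentially a bookkeeping reformulation of $\p$ as a polynomial optimization problem in the lifted variables, and all of the mathematical content is already packaged in the existence of the \bsal{} from Lemma \ref{sal} and the lifted description of $\s$ from Lemma \ref{lemma-setS}. The only point that warrants attention is that the liftings of $f$ and of the constraint functions $h_\ell$ are well defined on all of $\K$ (for instance, inverses are taken only of functions that do not vanish on $\K$, and $q$-th roots only of non-negative functions), which is subsumed in the standing hypothesis that $f$ and the $h_\ell$ are well-defined elements of $\A$.
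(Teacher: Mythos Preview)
Your argument is correct and is precisely the content the paper has in mind: the paper's own proof is a single sentence stating that the result follows directly from Lemma~\ref{lemma-setS} and the definition of the \bsal{} $\Psi_f$, and your two-inequality verification is the natural unpacking of that sentence.
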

The proof follows directly from Lemma \ref{lemma-setS} and the definiton
of the \bsal $\Psi_f$.\\

Then one may apply the moment-sos approach developed in e.g. \cite{Lasserre1,lasserrebook}
and build up a hierarchy of semidefinite programs whose associated sequence of optimal values converges to the global optimum $f^*$.

That is, with $\w=(\w_{\alpha\beta\gamma})$ being a real sequence indexed in the index set $\N^n\times\N^{t}\times\N^{m_f}$, let $L_\w:\R[\x,\y,\z]\to\R$ be the linear functional
\[p\:(=\sum_{\alpha\beta\gamma}p_{\alpha\beta\gamma}\,\x^\alpha\,\y^\beta\;\z^\gamma)\quad\mapsto
L_\w(p):=\sum_{\alpha\beta\gamma}p_{\alpha\beta\gamma}\,w_{\alpha\beta\gamma}.\]
Let $\theta\in\R[\x,\y\,\z]$ be the quadratic polynomial
\[(\x,\y\,\z)\,\mapsto\,\theta(\x,\y,\z)\,:=\,M-\Vert (\x,\y,\z)\Vert^2,\]
with $M$ such that $M>\Vert(\x,\y,\z)\Vert^2$ on $\Omega$,
and let
$a_k:=\lceil ({\rm deg}\,u_k)/2\rceil$ and
$b_\ell:=\lceil ({\rm deg}\,q_\ell)/2\rceil$, for every $k=1,\ldots,r$, $\ell=1,\ldots,t_f$.

For $i\geq\max_{k,\ell}[a_k,b_\ell]$, consider the semidefinite program
\begin{equation}
\label{sdp1}
\begin{array}{rlll}
\rho_i=&\min &w_{00m_f}&\\
&\mbox{s.t.}&M_i(\w)&\succeq0\\
&&M_{i-a_k}(u_k\,\w)&\succeq0,\quad k=1,\ldots,r\\
&&M_{i-b_\ell}(q_\ell\,\w)&\succeq0,\quad \ell=1,\ldots,t_f\\
&&M_{i-1}(y_j\,\w)&\succeq0,\quad j\in\,J\\
&&M_{i-1}(z_j\,\w)&\succeq0,\quad j\in\,J_f\\
&&M_{i-1}(\theta\,\w)&\succeq0\\
&&w_{000}&=1.
\end{array}
\end{equation}
\begin{Theorem}
\label{th-sdp}
Consider the hierarchy of semidefinite programs (\ref{sdp1}).

{\rm (a)} The sequence $(\rho_i)$ is monotone nondecreasing and 
$\rho_i\,\to\,f^*$ as $i\to\infty$.

{\rm (b)} Let $\w^i$ be an optimal solution of (\ref{sdp1}) and assume that
\begin{equation}
\label{rank}
{\rm rank}\,M_i(\w)\,=\,{\rm rank}\,M_{i-c}(\w)\,=:d,\end{equation}
whre $c:=\max_{b_k,a_\ell}$. Then $\rho_i=f^*$ and
$\w$ is the moment sequence of a $d$-atomic probability measure supported 
on $d$ points of the set $\Omega$ defined in (\ref{setomega}).
\end{Theorem}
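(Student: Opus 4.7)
The plan is to recognize program (\ref{sdp1}) as the standard Lasserre moment hierarchy applied to the polynomial reformulation given by the preceding Proposition, namely $f^*=\min\{z_{m_f}:(\x,\y,\z)\in\Omega\}$. The extra localizing matrix $M_{i-1}(\theta\,\w)\succeq0$ encodes the ball constraint $\|(\x,\y,\z)\|^2\leq M$, and its role is precisely to make the quadratic module attached to $\Omega$ archimedean. The existence of a sufficiently large $M$ is guaranteed because $\K$ is compact and the lifted coordinates $y_k,z_j$ on $\Omega$ correspond to values on $\K$ of functions in $\A$ that are themselves bounded (each is built from polynomials by the b.s.a. operations, which preserve boundedness on a compact set). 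Once archimedeanity is in place, the statement reduces to the classical convergence analysis of the moment-sos hierarchy.

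For part (a), monotonicity is immediate: if $\w^{i+1}$ is feasible at level $i+1$, then its truncation to moments of total degree $\leq 2i$ is feasible at level $i$ (every localizing matrix at level $i$ is a principal submatrix of the corresponding one at level $i+1$), and the objective $w_{00m_f}$ depends on a single low-degree moment whose value is preserved under truncation; hence $\rho_i\leq \rho_{i+1}\leq f^*$. For the convergence $\rho_i\to f^*$ I would use duality together with Putinar's Positivstellensatz. Given $\epsilon>0$ the polynomial $z_{m_f}-f^*+\epsilon$ is strictly positive on $\Omega$, so archimedeanity yields a representation of the form (\ref{put}) involving $\theta$, the inequality generators $g_j,y_k,z_j$, and the ideal generators $u_k,q_\ell$. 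Pairing this identity with an optimal feasible $\w^i$ at any level $i$ exceeding half the degree of the certificate gives $w_{00m_f}\geq f^*-\epsilon$, hence $\rho_i\geq f^*-\epsilon$ for $i$ large; combined with the reverse inequality $\rho_i\leq f^*$ (any Dirac mass at a minimizer yields a feasible $\w$), this proves $\rho_i\nearrow f^*$.

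For part (b), hypothesis (\ref{rank}) is the flat-extension condition of Curto--Fialkow. The standard extraction argument (as systematized by Henrion--Lasserre) shows that $\w^i$ extends uniquely to an infinite moment sequence $\widehat{\w}$ whose moment matrix has rank $d$, and that $\widehat{\w}$ is the moment sequence of a $d$-atomic positive measure $\mu$ whose atoms are the common zeros of the kernel of $M_i(\w)$. The simultaneous low rank of every localizing matrix in (\ref{sdp1}) then forces each atom $\ba\in\mathrm{supp}\,\mu$ to satisfy $g_j(\ba)\geq0$, $u_k(\ba)=0$, $q_\ell(\ba)=0$, $y_k(\ba)\geq0$ for $k\in J$, $z_j(\ba)\geq0$ for $j\in J_f$, and $\theta(\ba)\geq 0$, i.e.\ $\ba\in\Omega$; the normalization $w_{000}=1$ makes $\mu$ a probability measure. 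Therefore $\rho_i=w_{00m_f}=\int z_{m_f}\,d\mu\geq f^*$, and combined with $\rho_i\leq f^*$ from part (a) we obtain $\rho_i=f^*$.

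The main obstacle I anticipate is the bookkeeping around the equality constraints $u_k=0$ and $q_\ell=0$ encoded as one-sided localizing inequalities $M_{i-a_k}(u_k\,\w)\succeq0$: these must be interpreted either in tandem with their negative counterparts (enforcing $u_k\,\w=0$ as a linear relation on moments) or absorbed into the ideal part of Putinar's decomposition. Checking that the flat-extension procedure still identifies atoms lying exactly on the varieties $\{u_k=0\}$ and $\{q_\ell=0\}$ is a routine adaptation of the equality-constrained extraction scheme and does not affect the structure of the proof.
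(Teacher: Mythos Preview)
Your proposal is correct and follows exactly the paper's approach: the paper's proof consists of the single sentence ``As $\p$ is a standard polynomial optimization problem, the proof follows directly from e.g.\ \cite{Lasserre1,lasserrebook},'' and you have simply unpacked what that citation contains (monotonicity by truncation, convergence via Putinar's Positivstellensatz under the archimedean constraint $\theta\geq 0$, and exactness under the Curto--Fialkow flat-extension condition). Your remark about the one-sided localizing constraints for the equalities $u_k=0$, $q_\ell=0$ is a legitimate caveat about the paper's notation rather than a gap in your argument; in the standard references these are indeed treated as two-sided (equivalently, as ideal constraints), and with that reading everything goes through as you describe.
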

\begin{proof}
As $\p$ is a standard polynomial optimization problem,
the proof follows directly from e.g. \cite{Lasserre1,lasserrebook}.\end{proof}

Theorem \ref{th-sdp} is illustrated below on the following two simple examples:
\begin{Example}
Let $n=2$ and $\p:\:f^*=\displaystyle\max_\x\{\,\vert x_1\vert \,x_2-x_1^2\,:\:
\Vert \x\Vert^2=1\}$. 
Hence
\[\Omega=\{(\x,z)\::\:\Vert \x\Vert^2-1=0;\:z^2-x_1^2=0;\:z\geq 0\}.\]
Observe that $\vert x_1\vert\leq 1$ on $\K=\{\x\,:\,\Vert\x\Vert^2=1\}$ and so one may choose
$\theta(\x,z):=\:3-\Vert(\x,z)\Vert^2$.
At the second relaxation (\ref{sdp1}) the rank condition (\ref{rank}) is satisfied with $d=2$,
and one obtains $f^*=\rho_2=0.2071$ with the two optimal solutions:
\[(\x^*,z^*)=(0.3827,\,0.9239,\,0.3827);\:
(\x^*,z^*)=(-0.3827,\,-0.9239,\,0.3827).\]
\end{Example}
\begin{Example}
Let $n=2$ and $\p:\:f^*=\displaystyle\max_\x\{\,x_1\,\vert x_1-2x_2\vert \,:\:
\Vert \x\Vert^2=1\}$. 
Hence
\[\Omega=\{(\x,z)\::\:\Vert \x\Vert^2-1=0;\:z^2-(x_1-2x_2)^2=0;\:z\geq 0\}.\]
Observe that $\vert x_1-2x_2\vert\leq 3$ on $\K=\{\x\,:\,\Vert\x\Vert^2=1\}$ and so one 
may choose $\theta(\x,z):=11-\Vert(\x,z)\Vert^2$.
At the second relaxation (\ref{sdp1}) the rank condition (\ref{rank}) is satisfied with $d=1$,
and one obtains $f^*=\rho_2=1.6180$ and the optimal solution:
$(\x^*,z^*)=(0.8507,\,-0.5257,\,1.9021)$.
\end{Example}

\begin{Remark}

(a) A usual lifting strategy (well-known from optimizers) to handle optimization problems 
of the form $\p:\min_\x\{f(\x)\,:\,\x\in\K\}$,
with $f(\x)=\max_{j\in J}f_j(\x)$ for some finite index set $J$, is to use only one lifting $z$ and solve
\[\min_{\x,z}\:\{\,z\,:\,\x\in\K,\:z\geq f_j(\x),\:j\in J\}.\]
Similarly with $f(\x)=\vert g(\x)\vert$ one uses $z\geq\pm g(\x)$.
In this case one has only one additional variable with no equality constraint. 
In particular, this lifting preserves convexity (when present). But this lifting
is valid in some particular cases only and for instance it does not work if $f$ is replaced with
$f(\x)=h(\x)-\max_{j\in J}[f_j(\x)]$, or $f(\x)=h(\x)-\vert g(\x)\vert$.

(b) In the description of the set $\Omega$, a "lifting" $\y^\ell$ is associated with each non polynomial 
function $h_\ell\in\A$ that appears in the description of $\p$, as well as a lifting $\z$ for $f$ 
(if non polynomial).
This can be rapidly penalizing. However it is worth noting that
there is no mixing between the lifting variables $\z$ and $\y^\ell$, as well as 
between the lifting variables $\y^\ell$ of different $\ell$'s. The coupling of all these variables is through the variables $\x$. Hence there is an obvious sparsity pattern if the 
set of all variables $(\x,\y,\z)$ is written as the union (with overlaps)
\[(\x,\y,\z)\,=\,(\x)\,\cup\,(\x,\z)\,\cup\,(\x,\y^1)\,\cup\,\cdots\,\cup\,(\x,\y^s)\]
and this sparsity pattern obviously satifies the co-called {\it running intersection property}, 
because for each $1\leq k <s$,
\[(\x,\y^{k+1})\,\bigcap\,\left((\x)\cup(\x,\z)\displaystyle\cup_{i=1}^{k}(\x,\y^i)\right)\,\subseteq (\x).\]
See e.g. \cite{lasserresparse}.
This is a good news because with such a sparsity pattern one may use
the sparse semidefinite relaxations defined in \cite{Waki},
whose associated sequence of optimal values was still proved
to converge to  $f^*$ in \cite{lasserresparse}. Hence
 if on the one hand lifting is penalizing, on the other hand
the special structure of the lifted polynomial optimization
problem permits to use specialized "sparse" relaxations, which (partly) compensates
the increase in the number of variables. For more details 
on sparse semidefinite relaxations, the interested reader is referred to e.g. \cite{lasserresparse}
and \cite{Waki}. And finally, one should also bear in mind that the original problem $\p$ being
very hard, there is obviously some price to pay for its resolution!

\end{Remark}

\end{document}